\theoremstyle{definition}
\newtheorem{theorem}{Theorem}[section]
\newtheorem{prop}{Proposition}[section]
\newtheorem{algo}{Algorithm}[section]
\newtheorem{lem}{Lemma}[section]
\theoremstyle{remark}
\newtheorem{remark}[theorem]{Remark}
\theoremstyle{cor}
\numberwithin{equation}{section}
\def\my_c{c_\infty}
\newcommand{\mynewtheorem}[2]{
  \newaliascnt{#1}{dummy}
  \newtheorem{#1}[#1]{#2}
  \aliascntresetthe{#1}
  % maybe we will squish some autoref defaults, but who cares?
  \expandafter\def\csname #1autorefname\endcsname{#2}
}
\newcommand{\be}{\begin{equation}}
\newcommand{\ee}{\end{equation}}
\newcommand{\bde}{\begin{displaymath}}
\newcommand{\ede}{\end{displaymath}}
\newcommand{\beq}{\begin{eqnarray*}}
\newcommand{\eeq}{\end{eqnarray*}}
\newcommand{\beqa}{\begin{eqnarray}}
\newcommand{\eeqa}{\end{eqnarray}}
\newcommand{\bel }{\left\{\begin{array}{ll}}
\newcommand{\eel}{\cr \end{array} \right.}
\newcommand{\dcb}{\begin{array}{lll}}
\newcommand{\dce}{\end{array}}
\newcommand{\ebe}{\begin{enumerate}\setlength{\baselineskip}{13pt}\setlength{\parskip}{0pt}}
\newcommand{\dbe}{\end{enumerate}}
\def\0{{\mathbf{0}}}
\def\d{{{\rm d}}}
\begin{document}

\setcounter{tocdepth}{1}

\title{Deterministic computation of quantiles in a Lipschitz framework}

\maketitle

\begin{center}
Yurun Gu
\footnote{address: SAMOVAR, Telecom SudParis, Institut Polytechnique de Paris, 19 rue Marguerite Perey, 91120 Palaiseau, France \\
 e-mail: yurun.gu@telecom-sudparis.eu},
Cl\'ement Rey 
\footnote{address: CMAP, École Polytechnique, Institut Polytechnique de Paris, Route de Saclay, 91120 Palaiseau, France\\
 e-mail: clement.rey@polytechnique.edu \\
This work was supported by the Chair Stress Test, RISK Management and Financial Steering, led by the French Ecole Polytechnique and its Foundation and sponsored by BNP Paribas.}
\end{center}

%\author{Yurun Gu}
%%    Address of record for the research reported here
%\address{Yurun Gu,adress}
%%    Current address
%%\curraddr{}
%\email{yurungu@mail}
%
%\author{Cl\'ement Rey}
%%    Address of record for the research reported here
%\address{Cl\'ement Rey, \'Ecole Polytechnique, France}
%%    Current address
%%\curraddr{}
%\email{clement.rey@polytechnique.edu}

%    General info

%\date{\today}

\begin{abstract}
In this article, we focus on computing the quantiles of a random variable $f(X)$, where $X$ is a $[0,1]^d$-valued random variable, $d \in \mathbb{N}^{\ast}$, and $f:[0,1]^d\to \mathbb{R}$ is a deterministic Lipschitz function. We are particularly interested in scenarios where the cost of a single function evaluation is high, while the law of $X$ is assumed to be known. In this context, we propose a deterministic algorithm to compute deterministic lower and upper bounds for the quantile of $f(X)$ at a given level $\alpha \in (0,1)$. With a fixed budget of $N$ function calls, we demonstrate that our algorithm achieves an exponential deterministic convergence rate for $d=1$ ($\mathcal{O}( \rho^N)$ with $\rho \in (0,1)$) and a polynomial deterministic convergence rate for $d>1$ ($\mathcal{O}(N^{-\frac{1}{d-1}})$) and show the optimality of those rates. Furthermore, we design two algorithms, depending on whether the Lipschitz constant of $f$ is known or unknown.
\end{abstract}

\noindent {\bf Keywords :} Quantile approximation, adaptive algorithm, Lipschitz functions, convergence, optimality.\\
{\bf AMS MSC 2020:} 65C20, 62E17, 65D15, 68W25, 68Q25.

\tableofcontents

\section{Introduction}\label{sec:introduction}

In this article, we consider a $\Omega:=[0,1]^{d}$-valued ($d \in \mathbb{N}^{\ast}$) random variable $X$ and a Lipschitz function $f:[0,1]^{d}\to \mathbb{R}$. We are interested in scenarios where the law of $X$ is known and our focus lies on the number of calls to $f$. This approach is relevant in situations where $f$ incurs high computational cost, while $X$ is a random variable with well-known distribution, such as the uniform distribution. We propose an algorithm that uses $N \in \mathbb{N}^{\ast}$ calls to $f$ in order to compute an approximation of the $\alpha$-order quantile of $f(X)$, $\alpha \in (0,1)$, defined by
\begin{align}
\label{def:quantile}
%q_{\alpha} : = & \mbox{arginf}_{l \in \mathbb{R}} \{ \mathbb{P}(f(X) \leqslant l) \geqslant \alpha \} \\
q_{\alpha}(f,X)  :=& \inf \{l \in \mathbb{R}, \mathbb{P}(f(X) \leqslant l) \geqslant \alpha \}.
\end{align}
Our contribution is to provide, for any $N \in \mathbb{N}^{\ast}$, a deterministic approximation $q^{N}_{\alpha}(f,X)$ for $q_{\alpha}(f,X)$ and to demonstrate that (see Theorem \ref{th:conv_gen} and Theorem \ref{th:conv_gen_unknown}), the approximation error converges to zero as $N$ tends to infinity. More generally, we derive a deterministic rate of convergence for any sufficiently large $N$. This rate has exponential nature for $d=1$ ($\vert q^{N}_{\alpha}(f,X)-q_{\alpha}(f,X) \vert \leqslant C \rho^{N}$ with $\rho \in (0,1)$ and $C>0$) and has polynomial nature when $d>1$ ($\vert q^{N}_{\alpha}(f,X)-q_{\alpha}(f,X) \vert \leqslant  CN^{-\frac{1}{d-1}}$ with $C>0$). \\

Quantile computation has a wide range of applications across various domains, including banking  \cite{Edgeworth_1888}, database optimization  \cite{Greenwald_Khanna_2016}, sociology %\cite{Chang_Moldir_Zhang_Nazar_2023},
\cite{Ullah_Luo_Adebayo_Sunday_Kartal_2023}%\cite{Kartal_Pata_2023}
, binary classification \cite{Tambwekar_Maiya_Dhavala_Saha_2021} or sensor networks \cite{Lee_epedelenlioglu_Spania_Muniraju_2020}. 
%A nonparametric quantile estimation is given by an equivalent quadratic programming problem in \cite{Takeuchi_Le_Sears_Smola_2006}. 
Additionally, related challenges such as multivariate quantiles \cite{Hallin_Paindaveine_Siman_2010} or estimating the difference of quantiles at two given levels \cite{Yang_Zhao_2018} also warrant attention. It is worth mentioning that, many existing quantile estimation methods are derived from the equivalent problems of minimizing loss functions \cite{Koenker_2005}, 
\begin{align*}
    q_{\alpha}(f,X)=\underset{y \in \mathbb{R}} {\mbox{argmin}}\{\alpha E[(f(X)-y)_+]+(1-\alpha)E[(f(X)-y)_-]\},
\end{align*}
in which we denote $a_{+} :=\max\{a,0\}$ and $a_{-} :=\max\{-a,0\}$ for $a \in \mathbb{R}$. For some applications, we refer $e.g.$, to quantile estimation methods designed in \cite{Gomes_Pestana_2007} or \cite{Bellini_Klar_Muller_Rosazza_2014}. \\

When computing quantity of interest related to the law of $f(X)$, a common approach, when possible, is to use the simulation based Monte Carlo methods. These techniques involve generating a large sample of simulations (usually independent) of $f(X)$ to approximate our quantity of interest. In our context, an extensive literature is dedicated to performing quantile computation through Monte Carlo methods. Under mild conditions, the Monte Carlo estimator of $q_{\alpha}(f,X)$ typically satisfies a central limit theorem with a weak convergence rate given by $CN^{-\frac{1}{2}}$. Many endeavours aim to reduce the variance of these estimators, represented by the constant $C$. The importance sampling approach, initially showcased in \cite{Glynn_2011}, has since been developed further in subsequent works such as \cite{Goffinet_Wallach_1996}, \cite{Egloff_Leippold_2010}, and \cite{Byon_Ko_Lam_Pan_Young_2020}. Similarly, the design of control variates can be adapted, as demonstrated in \cite{Hsu_Nelson_1990}, \cite{Hesterberg_Nelson_1998}, and \cite{Cannamela_Garnier_Iooss_2008}. Additionally, multi-level splitting methods, as described in \cite{DelMoral_Furon_Guyader_2012}, \cite{Cerou_Guyader_2016}, and \cite{Cerou_Guyader_Rousset_2019}, have been applied to quantile computation in \cite{Guyader_Hengartner_Matzner_2011}.\\

Considering not only the nature or the value of the rate of convergence, our algorithm differs from a standard Monte Carlo approach in the following way: Our algorithm is deterministic, meaning it does not rely on simulations of random variables with form $f(X)$. It simply calls the function $f$ at some specific points which are selected sequentially according to the law of $X$ and to previous calls to $f$. We thus manage to derive deterministic rates of convergence, whereas Monte Carlo methods provide weak rates due to the convergence in law of the central limit theorem. Hence, we provide deterministic upper and lower bounds for $q_{\alpha}(f,X)$ while a Monte Carlo approach provides lower and upper bounds by the way of a confidence interval.  In particular, with Monte Carlo methods, there is a small, albeit nonzero, probability that $q_{\alpha}(f,X)$ lies outside the computed confidence interval. Moreover, with Monte Carlo methods, the confidence interval is asymptotic in the limit $N \to + \infty$, whereas our deterministic intervals are given for any finite $N$ larger than an explicit constant.\\

We design a first algorithm that requires evaluating $f$ at specific points, as well as knowing its Lipschitz constant for implementation, while a Monte Carlo approach only requires the ability to simulate $f(X)$. However, we also introduce a second algorithm, which requires that $f$ is Lipschitz but does not need to have access to the Lipschitz constant. These algorithms are inspired by the methodologies outlined in \cite{Cohen_Devore_Petrova_Wojtaszczyk_2014}, originally developed to approximate the minimum of a Lipschitz function. In this context, as in our case, the rate of convergence depends on the dimension of $\Omega$, with exponential convergence when $d=1$ and polynomial convergence when $d>1$. It is worth noting that these algorithms were further adapted in \cite{Bernard_Cohen_Guyader_Malrieu_2022} for the computation of failure probabilities, $i.e.$, for calculating $\mathbb{P}(f(X)>c)$ for a given $c \in \mathbb{R}$, when the Lipschitz constant of $f$ is known. In this case, similar convergence regimes are observed depending on the value of $d$.\\

%We design a first algorithm that requires computing $f$ at some specific points but also knowing its Lipschitz constant for its implementation while a Monte Carlo approach merely necessitates the ability to simulate $f(X)$. Nevertheless we also propose a second algorithm requiring that $f$ is Lipschitz but that does not need to have access to the Lipschitz constant. These algorithms draw inspiration from methodologies outlined in \cite{Cohen_Devore_Petrova_Wojtaszczyk_2014} originally devised to find the minimum of a Lipschitz function. In this case, the rate of convergence is contingent, as for our case, to the dimension with exponential convergence when $d=1$ and polynomial convergence when $d>1$.Notice that these algorithms were further adapted in \cite{Bernard_Cohen_Guyader_Malrieu_2022} in the case of the computation of a failure probability that is the computation of $\mathbb{P}(f(X)>c)$ for a given $c \in \mathbb{R}$ when the Lipschitz constant is known. In this case, similar regimes of convergences are observed according to the value of $d$.\\

Our approach employs a dichotomous strategy, leveraging the Lipschitz property of $f$ to systematically exclude certain regions $\mathcal{R}$ within $[0,1]^{d}$ where we know $q_{\alpha}(f,X) \notin f(\mathcal{R})$. By avoiding the evaluation of $f$ in these regions, we achieve convergence with the expected rate. When the Lipschitz constant is unknown, we adopt similar but parallel computations. Each of them mimics the known Lipschitz constant algorithm, but using exponentially increasing Lipschitz constant candidates. The number of calls to $f$ allocated to each candidate decreases as the candidate's value increases. Remarkably, this yields convergence rates of comparable order to the scenarios where the Lipschitz constants are known.\\

Furthermore, we establish the optimality of the deterministic rates achieved by the algorithms we develop. In essence, under our framework, we show that any algorithm providing an approximation $\tilde{q}^{N}_{\alpha}(f,X)$ for $q_{\alpha}(f,X)$ ($i.e.$, $\tilde{q}^{N}_{\alpha}(f,X)$ is built by calling the function $f$ at $N$ points) converges, for at least one triplet $(f,X,\alpha)$ in our framework, with deterministic rate not faster than exponential, $\vert \tilde{q}^{N}_{\alpha}(f,X)-q_{\alpha}(f,X) \vert \geqslant  \tilde{C} \tilde{\rho}^{N}$,  $\tilde{\rho} \in (0,1)$, when $d=1$, and polynomial, $\vert \tilde{q}^{N}_{\alpha}(f,X)-q_{\alpha}(f,X) \vert \geqslant  \tilde{C}N^{\frac{1}{d-1}}$, $\tilde{C}>0$, when $d>1$.\\

The article is presented in the following way. The algorithm for the computation of the approximation $q^{N}_{\alpha}(f,X)$ for the quantile of order $\alpha$ of $f(X)$ when the Lipschitz constant of $f$ is known is exposed in Section \ref{sec:Lips_known}, Algorithm \ref{algo:gen_Lip_connu}. Its convergence, along with expected rates, is addressed in Theorem \ref{th:conv_gen}.  In the case of unknown Lipschitz constant, the algorithm is presented in Section \ref{sec:Lip_unknown}, Algorithm \ref{algo:gen_Lip_inconnu} and the related convergence result is presented in Theorem \ref{th:conv_gen_unknown}.The optimality of the rates of convergence achieved by Algorithms \ref{algo:gen_Lip_connu} and \ref{algo:gen_Lip_inconnu} is proved in Section \ref{sec:opt}., Propositions \ref{prop:optimality_d>1} and \ref{prop:optimality_d=1}. A numerical illustration of the convergence of both algorithms is given in Section \ref{Numerical_illustration}.

\section{Computation of quantile with known Lipschitz constant}\label{sec:Lip_unknown}
\label{sec:Lips_known}
In this section, we design an algorithm for the computation of $q_{\alpha}(f,X)$ when the Lipschitz constant of $f$ is known. We also prove convergence of this algorithm towards $q_{\alpha}(f,X)$ with explicit rate (polynomial or exponential regarding the value of $d$) in Theorem \ref{th:conv_gen}. This convergence is established under the following assumptions concerning $f,X$ and $\alpha$. Notice that the exact same assumptions will be used in the next Section when the Lipschitz constant is unknown.
\begin{enumerate}

\item \label{hyp:lipschitz}
$f:\mathbb{R}^{d} \to \mathbb{R}$ is a Lipschitz function: There exists $ L_f >0$ such that
 \begin{align*}
 \forall x,y \in [0,1]^d, \qquad \vert f(x) - f(y) \vert \leqslant L_f \vert x - y \vert_{\mathbb{R}^{d}},
 \end{align*}
 where $\vert \cdot \vert_{\mathbb{R}^{d}}$ is the usual Euclidean norm in $\mathbb{R}^{d}$.
 \vspace{3mm}
\item \label{hyp:level_set} Level set assumption : There exists $M>0$ such that
%\begin{align*}
%\forall \delta >0, \quad \mathbb{P}(f(X) \in [q_{\alpha}(f,X)-\delta,q_{\alpha}(f,X)+\delta])\leqslant M \delta
%\end{align*}
%a voir si oj remplace pas par 
\begin{align*}
\forall \delta >0, \quad \lambda_{\mbox{Leb}}(x \in [0,1]^{d}, f(x) \in [q_{\alpha}(f,X)-\delta,q_{\alpha}(f,X)+\delta])\leqslant M \delta,
\end{align*}
where $\lambda_{\mbox{Leb}}$ is the usual Lebesgue measure.
% \vspace{3mm}
%\item \label{densite_bornee} The density function of $X$ is bounded by $K>0$.
\end{enumerate}

\subsection{Preliminaries}
We begin by introducing standard elements and results that will be involved in the definition and the proof of convergence of our algorithm. Our first step is to subdivide the set $[0,1]^{d}$. We begin by introducing the center points of our subdivisions. For $k \in \mathbb{N}$, the level of the subdivision, and $i \in \{0,\ldots,3^{k}-1  \}$, we define
\begin{align*}
d_{i}^{k}= \frac{2i+1}{2 \times 3^{k}},
\end{align*}
and for $\beta \in \{0,\ldots,3^{k}-1  \}^{d}$, we focus on the following subdivision,
\begin{align*}
D_{\beta}^{k}= \{ x \in [0,1]^{d}, x_{j} \in [d_{\beta_{j}}^{k} - \frac{1}{2 \times 3^{k}} , d_{\beta_{j}}^{k} + \frac{1}{2 \times 3^{k}}  ) \cup \{1\}^{\mathds{1}_{\beta_{j}=3^{k}-1}} \},
\end{align*}
with notation $\{1\}^{0} =\emptyset$ and $\{1\}^{1}=\{1\}$. Notice that the subdivisions are distinct and $\cup_{\beta \in \{0,\ldots,3^{k}-1  \}^{d}} D_{\beta}^{k}=[0,1]^{d} $. We also define $\delta_{\beta}^{k} := \sup_{x \in D_{\beta}^{k}} \vert x- d_{\beta}^{k} \vert $ ($d_{\beta}^{k} = (d_{\beta_{1}}^{k} ,\ldots , d_{\beta_{d}}^{k} )$) and remark that $\delta_{\beta}^{k}$ does not depend on $\beta$, so we simply denote $\delta^{k}$ in the sequel. In particular, we have
\begin{align*}
\delta^{k} =& (\sum_{i=1}^{d} (d_{\beta_{i}}^{k}-(d_{\beta_{i}}^{k}+\frac{1}{2 \times 3^{k}}))^{2})^{\frac{1}{2}} =  \frac{d^{\frac{1}{2}}}{2} \frac{1}{3^{k}}.
\end{align*}

We are now in a position to introduce our approximation for $q_{\alpha}(f,X)$. For $k \in \mathbb{N}$,  we introduce 
\begin{align}
\label{def:approx_general}
\mathfrak{q}_{\alpha}^{k}(f,X)=\sup\{f(d_{\beta}^{k}) ,\beta \in \{0,\ldots,3^{k}-1\}^d, \sum_{\gamma \in \{0,\ldots,3^{k}-1\}^d,  f(d_{\gamma}^{k})\geqslant f(d_{\beta}^{k})} \mathbb{P}(X\in D^{k}_{\gamma}) \geqslant 1- \alpha \},
\end{align}
with $\sup \emptyset := \inf_{\beta \in \{0,\ldots,3^{k}-1\}^d}f(d_{\beta}^{k})$. It happens that $\mathfrak{q}_{\alpha}^{k}(f,X)$ is the value returned by our algorithm if it has enough budget to reach level of subdivision $k$ (and not higher). When we want to emphasize that the computation depends on the Lipschitz constant considered, we will denote $\mathfrak{q}_{L_{f},\alpha}^{k}(f,X) $ instead of $\mathfrak{q}_{\alpha}^{k}(f,X)$. \\

In definition (\ref{def:approx_general}), the computation is made on all $\beta \in \{0,\ldots,3^{k}-1\}^d$, which implies high computational cost if $\mathfrak{q}_{\alpha}^{k}(f,X)$ is used naively as an approximation for $q(f,X)$. We tackle this issue by introducing a sequence $\Pi^{k}\subset \{0,\ldots,3^{k}-1\}^d$, $k \in \mathbb{N}$, which is built recursively and such that, at step $k$, new computations of $f(d_{\beta}^{k})$ are only made for $\beta \in \Pi^{k}$. Exploiting the Lipschitz property, we will show in Lemma \ref{lem:def_approx} that, we can replace $ \{0,\ldots,3^{k}-1\}^d$ by $\Pi^{k}$ and $f(d_{\gamma}^{k})$ by a well choosen value when $\gamma \notin \Pi^{k}$ in  (\ref{def:approx_general}), without changing the definition of $\mathfrak{q}_{\alpha}^{k}(f,X)$. We now introduce this alternative way of defining $\mathfrak{q}_{\alpha}^{k}(f,X)$.\\

Let us define $\Pi^{0}=\{(0,\ldots,0)\}=\{ \{ 0 \}^{d}\}$, and $\tilde{\mathfrak{q}}^{0}_{\alpha}(f,X)=f(d^{0}_{ \{ 0 \}^{d}})$. Let us assume that for $k \in \mathbb{N}$, we have access to $\Pi^{k}$.  We compute  $f^{k}_{\beta}:=f(d^{k}_{\beta})$, $\beta \in \Pi^{k}\subset \{0,\ldots,3^{k}-1\}^d$ and if $\beta \notin   \Pi^k$, no new computation is done and we choose $f^{k}_{\beta} \in \{f(d_{\beta}^{k}) \} \cup \{f(d_{\beta^{-[l]}}^{k-l}),l \in\{1,\ldots,k\},\beta^{-[l-1]}\notin \Pi^{k-l+1} \}$ where for $\beta \in \{0,\ldots,3^{k}-1\}^d$, $\beta^{-[0]}=\beta$ and for $k \in \mathbb{N}^{\ast}$, $\beta^{-[1]}=   \gamma \in \{0,\ldots,3^{k-1}-1\}^d$ such that $\gamma_{i} =\lfloor \frac{ \beta_{i}}{3} \rfloor, i \in \{1,\ldots,d\}, $ and $\beta^{-[l+1]}=(\beta^{-[l]})^{-[1]}, l \in \mathbb{N},l \leqslant k$.\\

We define recursively
\begin{align} 
\label{def:approx_general_restr}
\tilde{\mathfrak{q}}^{k}_{\alpha}(f,X)=\sup\{f(d_{\beta}^{k}),\beta \in \Pi^{k}, \sum_{\gamma \in \{0,\ldots,3^{k}-1\}^d,  f_{\gamma}^{k} \geqslant f(d_{\beta}^{k})} \mathbb{P}(X\in D^{k}_{\gamma}) \geqslant 1- \alpha \},
\end{align}
and
\begin{align}
\label{def:Pik}
\Pi^{k+1}=\{3 \beta+\{0,1,2\}^d,\beta \in \Pi^k , f(d^{k}_{\beta}) \in [\tilde{\mathfrak{q}}^{k}_{\alpha}- 2L_f \delta^{k},\tilde{\mathfrak{q}}^{k}_{\alpha}+ 2 L_f \delta^{k}] \} .
\end{align}
We will sometimes denote $\Pi_{L_{f}}^{k}:=\Pi^{k}$ and $\tilde{\mathfrak{q}}^{k}_{L_{f},\alpha}(f,X):=\tilde{\mathfrak{q}}^{k}_{\alpha}(f,X)$ to emphasize the fact that $\Pi^{k}$ is computed in (\ref{def:Pik}) using $L_{f}$ as a Lipschitz constant. In particular such a notation may be relevant when we use an upper bound for $L_{f}$ instead of its true value. Notice that is  if we choose $L \geqslant L' \geqslant L_{f}$, we have $ \Pi_{L_{f}}^{k} \subset \Pi_{L'}^{k} \subset \Pi_{L}^{k} $.

%Let define recursively
%\begin{align*} 
%\tilde{\mathfrak{q}}^{k}_{\alpha}(f,X)=\sup\{f(d_{\beta}^{k}),\beta \in \Pi^{k} \setminus \mathcal{V}^{k}, \sum_{\gamma \in \{0,\ldots,3^{k}-1\}^d,  f_{\gamma}^{k} \geqslant f(d_{\beta}^{k})} \mathbb{P}(X\in D^{k}_{\gamma}) \geqslant 1- \alpha \}
%\end{align*}
%We also introduce
%\begin{align*}
%\underline{D}^{k}_{\alpha} = \bigcup_{\gamma \in \{0,\ldots,3^{k}-1\}^d,  f(d_{\gamma}^{k}) <\tilde{\mathfrak{q}}_{\alpha}^{k} -2L_{f}  \delta^{k}  }  D^{k}_{\gamma} 
%\end{align*}
%and
%\begin{align*}
%\overline{D}^{k}_{\alpha} = \bigcup_{\gamma \in \{0,\ldots,3^{k}-1\}^d,  f(d_{\gamma}^{k}) > \tilde{\mathfrak{q}}_{\alpha}^{k} +2 L_{f}  \delta^{k}  }  D^{k}_{\gamma} .
%\end{align*}
%Notice that for every $x \in \underline{D}^{k}_{\alpha} $, then $f(x) <q_{\alpha}(f,X)-L_{f}  \delta^{k} $ and for every $x \in \overline{D}^{k}_{\alpha} $, then $f(x) >+L_{f}  \delta^{k} q_{\alpha}(f,X)$.
%and
%\begin{align*}
%\Pi^{k+1}=\{3 \beta+\{0,1,2\}^d,\beta \in \Pi^k , f(d^{k}_{\beta}) \in [\hat{\mathfrak{q}}^{k}_{\alpha}- 2L_f \delta^{k},\tilde{\mathfrak{q}}^{k}_{\alpha}+ 2 L_f \delta^{k}] \}
%\end{align*}
%\begin{align*}
%\mathcal{V}^{k+1}=\{ \beta \in \{0,\ldots 3^{k+1}-1\}^d, \beta^{-[1]} \in \underline{D}^{k}_{\alpha}  \cup \overline{D}^{k}_{\alpha} \}
%\end{align*}

Due to the flexibility in the choice of $f^{k}_{\gamma}$, one might argue that $\tilde{\mathfrak{q}}^{k}_{\alpha}(f,X)$ may differ $w.r.t.$ the value chosen for $f^{k}_{\gamma}$. In the following result, we show that all the definitions are actually equivalent. We also prove that $\tilde{\mathfrak{q}}^{k}_{L,\alpha}(f,X)=\tilde{\mathfrak{q}}^{k}_{L_{f},\alpha}(f,X)$ for $L \geqslant L_{f}$.
\begin{lem}
\label{lem:def_approx}
For every $k \in \mathbb{N}$, the following assertions hold true.
\begin{enumerate}[label=\textbf{\Alph*.}]
\item \label{lem:def_approx_point1} 
The quantity $\mathfrak{q}_{\alpha}^{k}(f,X)$ defined in (\ref{def:approx_general}) satisfies
\begin{align}
\label{def:approx_general_bis}
\mathfrak{q}_{\alpha}^{k}(f,X)=\inf\{f(d_{\beta}^{k}) ,\beta \in \{0,\ldots,3^{k}-1\}^d, \sum_{\gamma \in \{0,\ldots,3^{k}-1\}^d,  f(d_{\gamma}^{k})\leqslant f(d_{\beta}^{k})} \mathbb{P}(X\in D^{k}_{\gamma}) \geqslant  \alpha \}.
\end{align}
\item \label{lem:def_approx_point2}
The quantity $\tilde{\mathfrak{q}}^{k}_{\alpha}(f,X)$ defined in (\ref{def:approx_general_restr}) satisfies
\begin{align*}
\tilde{\mathfrak{q}}^{k}_{\alpha}(f,X)=\inf\{f(d_{\beta}^{k}),\beta \in \Pi^{k}, \sum_{\gamma \in \{0,\ldots,3^{k}-1\}^d,  f_{\gamma}^{k} \leqslant f(d_{\beta}^{k})} \mathbb{P}(X\in D^{k}_{\gamma}) \geqslant \alpha \}.
\end{align*}
\item \label{lem:def_approx_point3}
Moreover,
\begin{align*}
\mathfrak{q}^{k}_{\alpha}(f,X)= \tilde{\mathfrak{q}}^{k}_{\alpha}(f,X).
\end{align*}
\end{enumerate}
\end{lem}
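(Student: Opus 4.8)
The plan is to treat the three parts in order, with Part~\ref{lem:def_approx_point3} being the real content and Parts~\ref{lem:def_approx_point1}--\ref{lem:def_approx_point2} being essentially the same elementary argument applied twice. For Part~\ref{lem:def_approx_point1}, I would fix $k$ and work with the finite multiset of values $\{f(d_\beta^k)\}_\beta$ weighted by the probabilities $p_\beta := \mathbb{P}(X \in D_\beta^k)$ (which sum to $1$ since the $D_\beta^k$ tile $[0,1]^d$ up to Lebesgue-null overlaps, and $X$ has a density). Let $v_1 < v_2 < \dots < v_m$ be the distinct values taken by $f$ on the center points, with total mass $w_j$ on level $v_j$. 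The ``$\sup$'' definition \eqref{def:approx_general} picks the largest $v_j$ such that the upper tail mass $\sum_{i \geq j} w_i \geq 1-\alpha$; the claimed ``$\inf$'' formula picks the smallest $v_j$ such that the lower cumulative mass $\sum_{i \leq j} w_i \geq \alpha$. I would show these two indices coincide by a one-line complementary-mass computation: $\sum_{i \geq j} w_i \geq 1-\alpha \iff \sum_{i < j} w_i \leq \alpha$, and then compare with $\sum_{i \leq j} w_i \geq \alpha$, using that the $w_i$ are strictly positive at the distinct values to pin down the threshold index exactly; the edge case where no $\beta$ qualifies (the $\sup\emptyset$ / $\inf\emptyset$ conventions) has to be checked to match, and it does since both conventions return $\min_\beta f(d_\beta^k)$. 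Part~\ref{lem:def_approx_point2} is literally the same argument carried out with the index set restricted to $\Pi^k$ in the outer selection and with $f^k_\gamma$ in place of $f(d^k_\gamma)$ in the inner sum; nothing new happens here, so I would just say ``identical to the proof of~\ref{lem:def_approx_point1}''.

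For Part~\ref{lem:def_approx_point3}, the goal is $\tilde{\mathfrak{q}}^k_\alpha(f,X) = \mathfrak{q}^k_\alpha(f,X)$, and the heart of the matter is showing that the pruning in \eqref{def:Pik} never discards a center point that could be selected, and that the surrogate values $f^k_\gamma$ used in the inner sums never change which cells contribute. I would argue by induction on $k$. The key geometric fact: if $\beta \notin \Pi^k$, then its ``ancestor'' $\beta^{-[l]}$ at the last level $k-l$ where it (or rather $\beta^{-[l-1]}$) was still active was dropped because $f(d^{k-l}_{\beta^{-[l]}})$ fell outside $[\tilde{\mathfrak{q}}^{k-l}_\alpha - 2L_f\delta^{k-l}, \tilde{\mathfrak{q}}^{k-l}_\alpha + 2L_f\delta^{k-l}]$. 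Since $D^k_\beta \subset D^{k-l}_{\beta^{-[l]}}$ and both center points lie in $D^{k-l}_{\beta^{-[l]}}$, the Lipschitz assumption~\eqref{hyp:lipschitz} gives $|f(d^k_\beta) - f(d^{k-l}_{\beta^{-[l]}})| \leq L_f \cdot 2\delta^{k-l}$ (diameter bound), so $f(d^k_\beta)$ is on the same side of the true quantile neighbourhood as its ancestor was of the level-$(k-l)$ approximation, and in particular $f(d^k_\beta)$ is either strictly above every value that can achieve the $\sup$ or strictly below it. I would make this precise by first establishing, as a sub-lemma, that $\tilde{\mathfrak{q}}^{k}_\alpha$ is monotone/stable enough that $|\tilde{\mathfrak{q}}^j_\alpha - q_\alpha(f,X)|$ (or at least $|\tilde{\mathfrak{q}}^j_\alpha - \mathfrak{q}^k_\alpha|$ for $j \leq k$) is controlled by $2L_f\delta^j$ — i.e. that the selected value on the pruned tree already agrees with the full-tree value up to the band width — and then conclude that a pruned-away cell's value is strictly separated from $\mathfrak{q}^k_\alpha(f,X)$, hence contributes to the inner tail sums in \eqref{def:approx_general} in a way that is completely determined (it's either definitely counted or definitely not), and this is exactly matched by any admissible choice of $f^k_\gamma$, since every such choice is the $f$-value at some ancestor, which lies within $2L_f\delta^{k-l}$ of $f(d^k_\gamma)$ and hence on the same side of the threshold.

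The main obstacle, and where I would spend the most care, is the bookkeeping in the last step: proving that replacing $f(d^k_\gamma)$ by an arbitrary admissible surrogate $f^k_\gamma$ in the constraint $\sum_{\gamma : f^k_\gamma \geq f(d^k_\beta)} \mathbb{P}(X \in D^k_\gamma) \geq 1-\alpha$ leaves the set of qualifying $\beta \in \Pi^k$ unchanged, and that extending the selection from $\Pi^k$ to all of $\{0,\dots,3^k-1\}^d$ (as in $\mathfrak{q}^k_\alpha$) does not introduce a new maximizer. Both reduce to the separation estimate: a cell $\gamma \notin \Pi^k$ has $f$-value — and surrogate value — bounded away from $\mathfrak{q}^k_\alpha(f,X)$ by more than the relevant band, so for any $\beta$ that is a genuine candidate for the $\sup$ (whose value is within $O(L_f \delta^k)$ of $\mathfrak{q}^k_\alpha$), the comparison $f^k_\gamma \gtrless f(d^k_\beta)$ has the same truth value as $f(d^k_\gamma) \gtrless f(d^k_\beta)$. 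I would isolate this as the one quantitative lemma, prove it cleanly using~\eqref{hyp:lipschitz} and the explicit $\delta^k = \tfrac{\sqrt d}{2}3^{-k}$, and then the equality $\mathfrak{q}^k_\alpha = \tilde{\mathfrak{q}}^k_\alpha$ — together with the independence of the Lipschitz constant used, giving $\tilde{\mathfrak{q}}^k_{L,\alpha} = \tilde{\mathfrak{q}}^k_{L_f,\alpha}$ for $L \geq L_f$ via $\Pi^k_{L_f} \subset \Pi^k_L$ and the fact that the extra cells in $\Pi^k_L$ are among those already shown irrelevant — follows formally.
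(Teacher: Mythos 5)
Your architecture coincides with the paper's: Parts \ref{lem:def_approx_point1} and \ref{lem:def_approx_point2} by the complementary-mass argument (identical to the paper's, which compares the tail mass above and below the selected value and uses that the value is attained at some $\beta^{\ast}$), and Part \ref{lem:def_approx_point3} by combining the two characterizations to get $\vert q_{\alpha}(f,X)-\mathfrak{q}^{k}_{\alpha}\vert \leqslant L_{f}\delta^{k}$ and then showing that a cell pruned from $\Pi^{k}$ has its value, and any admissible surrogate, strictly separated from $\mathfrak{q}^{k}_{\alpha}$ on a definite side. Your treatment of an ancestor at general depth $l$ is, if anything, more careful than the paper's write-up, which only spells out $l=1$.

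There is, however, one concrete quantitative slip: the estimate $\vert f(d^{k}_{\beta})-f(d^{k-l}_{\beta^{-[l]}})\vert \leqslant 2L_{f}\delta^{k-l}$ (Lipschitz constant times the \emph{diameter} of the ancestor cell) is too coarse, and with it the inequality chain does not close. The exclusion criterion only gives $\vert f(d^{k-l}_{\beta^{-[l]}})-\tilde{\mathfrak{q}}^{k-l}_{\alpha}\vert > 2L_{f}\delta^{k-l}$, so the two bounds cancel exactly and you conclude only that $f(d^{k}_{\beta})$ lies strictly on one side of $\tilde{\mathfrak{q}}^{k-l}_{\alpha}$ with zero residual margin; since $\mathfrak{q}^{k}_{\alpha}$ may differ from $\tilde{\mathfrak{q}}^{k-l}_{\alpha}$ by as much as $L_{f}(\delta^{k-l}+\delta^{k})$, this does not yield the strict separation from $\mathfrak{q}^{k}_{\alpha}$ that you need. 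The fix is to use the \emph{center-to-center} distance instead of the diameter: $\vert d^{k}_{\beta}-d^{k-l}_{\beta^{-[l]}}\vert \leqslant \delta^{k-l}-\delta^{k}$ (for $l=1$ this is $2\delta^{k}$, which is exactly what the paper uses). Then $f(d^{k}_{\beta})$ is separated from $\tilde{\mathfrak{q}}^{k-l}_{\alpha}$ by strictly more than $2L_{f}\delta^{k-l}-L_{f}(\delta^{k-l}-\delta^{k})=L_{f}\delta^{k-l}+L_{f}\delta^{k}$, which absorbs $\vert \tilde{\mathfrak{q}}^{k-l}_{\alpha}-q_{\alpha}(f,X)\vert\leqslant L_{f}\delta^{k-l}$ and $\vert q_{\alpha}(f,X)-\mathfrak{q}^{k}_{\alpha}\vert\leqslant L_{f}\delta^{k}$ and leaves the required strict inequality. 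With that correction your plan reproduces the paper's proof.
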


\begin{proof}
Let us prove \ref{lem:def_approx_point1}. The result is immediate for $k=0$. We fix $k \in \mathbb{N}^{\ast}$ and introduce
\begin{align*}
\mathfrak{p}_{\alpha}^{k}:=\inf\{f(d_{\beta}^{k}) ,\beta \in \{0,\ldots,3^{k}-1\}^d, \sum_{\gamma \in \{0,\ldots,3^{k}-1\}^d,  f(d_{\gamma}^{k})\leqslant f(d_{\beta}^{k})} \mathbb{P}(X\in D^{k}_{\gamma}) \geqslant  \alpha \} .
\end{align*}
We remark that (denoting shortly $\mathfrak{q}_{\alpha}^{k}:=\mathfrak{q}_{\alpha}^{k}(f,X)$),
\begin{align*}
\mathbb{P}(X \in \bigcup_{\gamma \in \{0,\ldots,3^{k}-1\}^d,f(d_{\gamma}^{k}) < \mathfrak{q}_{\alpha}^{k} } D_{\gamma}^{k}) < \alpha.
\end{align*}
Thus, since the subdivisions $D_{\gamma}^{k},\gamma \in \{0,\ldots,3^{k}-1\}^d$, are distinct, then $\mathfrak{p}_{\alpha}^{k} \geqslant \mathfrak{q}_{\alpha}^{k}$. Moreover,  by definition of $\mathfrak{q}_{\alpha}^{k}$,
\begin{align*}
\mathbb{P}(X \in \bigcup_{\gamma \in \{0,\ldots,3^{k}-1\}^d,f(d_{\gamma}^{k}) > \mathfrak{q}_{\alpha}^{k} } D_{\gamma}^{k}) < 1-\alpha,
\end{align*}
and it follows that
\begin{align*}
\mathbb{P}(X \in  \bigcup_{\gamma \in \{0,\ldots,3^{k}-1\}^d,f(d_{\gamma}^{k}) \leqslant \mathfrak{q}_{\alpha}^{k} } D_{\gamma}^{k}) \geqslant \alpha.
\end{align*}
Since there exists $\beta^{\ast} \in \{0,\ldots,3^{k}-1\}^d$, such that $f(d_{\beta^{\ast}}^{k}) = \mathfrak{q}_{\alpha}^{k} $,  we conclude that $\mathfrak{q}_{\alpha}^{k}  \geqslant \mathfrak{p}_{\alpha}^{k} $, and the proof of \ref{lem:def_approx_point1} is completed.  \\

Now let us prove \ref{lem:def_approx_point3}. The result is true for $k=0$ and let us prove it for $k \in \mathbb{N}^{\ast}$. Using definition (\ref{def:approx_general}), we have $q_{\alpha}(f,X) \geqslant \mathfrak{q}_{\alpha}^{k} - L_{f} \delta^{k}$, and using the relationship (\ref{def:approx_general_bis}), we have  $q_{\alpha}(f,X) \leqslant \mathfrak{q}_{\alpha}^{k} + L_{f} \delta^{k}$. Assume that there exists $\beta_{0} \notin \Pi^{k}$ such that $\mathfrak{q}_{\alpha}^{k}=f(d^{k}_{\beta_{0}})$. Using that $\beta_{0} \notin \Pi^{k}$, we have
\begin{align*}
\vert \mathfrak{q}_{\alpha}^{k-1}-f(d^{k-1}_{\beta_{0}^{-[1]}}) \vert > 2 L_{f} \delta^{k-1}.
\end{align*}
Hence,
\begin{align*}
\vert q_{\alpha}(f,X)-f(d^{k}_{\beta_{0}}) \vert  \geqslant &\vert \mathfrak{q}_{\alpha}^{k-1}-f(d^{k-1}_{\beta_{0}^{-[1]}}) \vert  - \vert f(d^{k-1}_{\beta_{0}^{-[1]}}) -f(d^{k}_{\beta_{0}}) \vert - \vert \mathfrak{q}_{\alpha}^{k-1}-q_{\alpha}(f,X) \vert \\
> & 2 L_{f} \delta^{k-1} -2 L_{f} \delta^{k}-L_{f} \delta^{k-1} \\
=&  L_{f} \delta^{k-1} -2 L_{f} \delta^{k}=L_{f} \delta^{k} ,
\end{align*}
which is impossible due to the previous observation. In particular, we can replace the condition $\beta \in \{0,\ldots,3^{k}-1\}^d$ by $\beta \in \Pi^{k}$ in (\ref{def:approx_general}). It remains to observe that $f(d_{\gamma}^{k})$ may be replaced by $f^{k}_{\gamma}$ as soon as $\gamma \notin \Pi^{k}$. Indeed, for such $\gamma$, using the same calculus as above, $f(d^{k-1}_{\gamma^{-[1]}})- \mathfrak{q}_{\alpha}^{k-1}$ and $f(d^{k}_{\gamma})- \mathfrak{q}_{\alpha}^{k}$ necessarily have the same sign, and the proof of \ref{lem:def_approx_point3} is completed. Using similar arguments, we obtain the proof of \ref{lem:def_approx_point2}.
\end{proof}

Furthermore, we can obtain an upper bound for the error between $q_{\alpha}(f,X)$ and our estimator, regarding that our algorithm reaches level $k$ (and not higher), $i.e.$, returns $ \mathfrak{q}_{\alpha}^{k}$.
\begin{lem}
\label{lem:intervalle_confiance}
For any $k \in \mathbb{N}$, we have
\begin{align}
\label{eq:intervalle_confiance}
\vert q_{\alpha}(f,X)- \mathfrak{q}_{\alpha}^{k} \vert \leqslant L_{f} \delta^{k}.
\end{align}

\end{lem}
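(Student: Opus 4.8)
The plan is to establish the two one-sided estimates $q_{\alpha}(f,X) \leqslant \mathfrak{q}_{\alpha}^{k} + L_{f}\delta^{k}$ and $q_{\alpha}(f,X) \geqslant \mathfrak{q}_{\alpha}^{k} - L_{f}\delta^{k}$; both are in fact already used (without being isolated) inside the proof of Lemma \ref{lem:def_approx}, so the task is mainly to spell them out cleanly. The single geometric input is the per-cube Lipschitz bound: by assumption \ref{hyp:lipschitz} and the identity $\sup_{x\in D_{\beta}^{k}}|x-d_{\beta}^{k}| = \delta^{k}$, one has $|f(x)-f(d_{\beta}^{k})| \leqslant L_{f}\delta^{k}$ for every $\beta$ and every $x\in D_{\beta}^{k}$. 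I will also use that $X$ has a density, so that the pairwise intersections of the cubes $D_{\gamma}^{k}$ (contained in finitely many hyperplanes) are $\mathbb{P}_{X}$-null, hence $\mathbb{P}(X\in\bigcup_{\gamma\in S}D_{\gamma}^{k}) = \sum_{\gamma\in S}\mathbb{P}(X\in D_{\gamma}^{k})$ for any family $S$ of multi-indices.

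For the upper bound I would invoke the infimum representation of $\mathfrak{q}_{\alpha}^{k}$ from point \ref{lem:def_approx_point1} of Lemma \ref{lem:def_approx}: there is $\beta^{\ast}$ with $f(d_{\beta^{\ast}}^{k}) = \mathfrak{q}_{\alpha}^{k}$ and $\sum_{\gamma:\,f(d_{\gamma}^{k})\leqslant \mathfrak{q}_{\alpha}^{k}}\mathbb{P}(X\in D_{\gamma}^{k})\geqslant\alpha$. For every such $\gamma$ and every $x\in D_{\gamma}^{k}$ the Lipschitz bound gives $f(x)\leqslant f(d_{\gamma}^{k}) + L_{f}\delta^{k} \leqslant \mathfrak{q}_{\alpha}^{k} + L_{f}\delta^{k}$, so $\bigcup_{\gamma:\,f(d_{\gamma}^{k})\leqslant\mathfrak{q}_{\alpha}^{k}}D_{\gamma}^{k}\subset\{x:\,f(x)\leqslant\mathfrak{q}_{\alpha}^{k}+L_{f}\delta^{k}\}$. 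Taking $\mathbb{P}_{X}$-measure and using the additivity above, $\mathbb{P}(f(X)\leqslant\mathfrak{q}_{\alpha}^{k}+L_{f}\delta^{k})\geqslant\sum_{\gamma:\,f(d_{\gamma}^{k})\leqslant\mathfrak{q}_{\alpha}^{k}}\mathbb{P}(X\in D_{\gamma}^{k})\geqslant\alpha$, and the definition $(\ref{def:quantile})$ of the quantile yields $q_{\alpha}(f,X)\leqslant\mathfrak{q}_{\alpha}^{k}+L_{f}\delta^{k}$.

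For the lower bound I would again use point \ref{lem:def_approx_point1} of Lemma \ref{lem:def_approx}: since $\mathfrak{q}_{\alpha}^{k}$ is the smallest value $f(d_{\beta}^{k})$ over the grid for which $\sum_{\gamma:\,f(d_{\gamma}^{k})\leqslant f(d_{\beta}^{k})}\mathbb{P}(X\in D_{\gamma}^{k})\geqslant\alpha$, every grid value strictly below $\mathfrak{q}_{\alpha}^{k}$ violates this inequality; taking the largest such value (or noting the relevant index set is empty otherwise) gives $\mathbb{P}(X\in\bigcup_{\gamma:\,f(d_{\gamma}^{k})<\mathfrak{q}_{\alpha}^{k}}D_{\gamma}^{k})<\alpha$. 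Now fix any $l<\mathfrak{q}_{\alpha}^{k}-L_{f}\delta^{k}$: if $f(x)\leqslant l$ and $x\in D_{\gamma}^{k}$, the Lipschitz bound forces $f(d_{\gamma}^{k})\leqslant l + L_{f}\delta^{k}<\mathfrak{q}_{\alpha}^{k}$, so $\{f(X)\leqslant l\}\subset\bigcup_{\gamma:\,f(d_{\gamma}^{k})<\mathfrak{q}_{\alpha}^{k}}D_{\gamma}^{k}$ and therefore $\mathbb{P}(f(X)\leqslant l)<\alpha$. Since this holds for all $l<\mathfrak{q}_{\alpha}^{k}-L_{f}\delta^{k}$, the definition $(\ref{def:quantile})$ gives $q_{\alpha}(f,X)\geqslant\mathfrak{q}_{\alpha}^{k}-L_{f}\delta^{k}$. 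Combining the two bounds gives $(\ref{eq:intervalle_confiance})$.

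I do not anticipate a genuine obstacle: everything reduces to bookkeeping with the $\inf$/$\sup$ characterisations of $\mathfrak{q}_{\alpha}^{k}$ and the per-cube Lipschitz estimate. The one point deserving attention is the reduction of probabilities of unions of cubes to sums of $\mathbb{P}(X\in D_{\gamma}^{k})$, which is precisely where the existence of a density for $X$ enters; note in particular that neither the level-set assumption \ref{hyp:level_set} nor the boundedness constant $K$ of \ref{densite_bornee} is needed for this lemma.
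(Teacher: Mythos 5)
Your proposal is correct and follows essentially the same route as the paper: the paper's proof of this lemma is a one-line reference to the two inequalities $q_{\alpha}(f,X)\geqslant \mathfrak{q}_{\alpha}^{k}-L_{f}\delta^{k}$ and $q_{\alpha}(f,X)\leqslant \mathfrak{q}_{\alpha}^{k}+L_{f}\delta^{k}$ asserted (without detail) inside the proof of Lemma \ref{lem:def_approx}, and your argument simply supplies those details via the $\sup$/$\inf$ characterisations, the per-cube Lipschitz bound, and the definition of the quantile. Your remarks on the strictness of $\mathbb{P}(X\in\bigcup_{\gamma:\,f(d_{\gamma}^{k})<\mathfrak{q}_{\alpha}^{k}}D_{\gamma}^{k})<\alpha$ and on the additivity over cube boundaries are exactly the points the paper leaves implicit.
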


\begin{proof}
This result follows directly from Lemma \ref{lem:def_approx} \ref{lem:def_approx_point1}. By applying (\ref{def:approx_general}), and noting that the subdivisions $D_{\beta}^{k},\beta \in \{0,\ldots,3^{k}-1\}^d$, are distinct, we obtain $q_{\alpha}(f,X) \geqslant \mathfrak{q}_{\alpha}^{k} - L_{f} \delta^{k}$. Additionally, using the relationship (\ref{def:approx_general_bis}), we have $q_{\alpha}(f,X) \leqslant \mathfrak{q}_{\alpha}^{k} + L_{f} \delta^{k}$.

\end{proof}

\subsection{Algorithm and Main result}
In this section, we introduce Algorithm \ref{algo:gen_Lip_connu} which is applicable when the Lipschitz constant $L_{f}$ is known and establish its convergence in Theorem \ref{th:conv_gen}. To build Algorithm \ref{algo:gen_Lip_connu},  we leverage Lemma \ref{lem:def_approx} and return $\mathfrak{q}^{k}_{\alpha}(f,X)$ for $k$, the highest subdivision level achievable with a global budget $N$. It is important to note that the knowledge of the constant $M$, which appears in the level-set assumption (\ref{hyp:level_set}), is not necessary to implement the algorithm.

\begin{algo}
\label{algo:gen_Lip_connu}  \quad \\
\noindent \textbf{Fix} $N \in \mathbb{N}^{\ast}$ the maximum number of calls to $f$ \\
$\Pi^{0}=\{(0,\ldots,0)\}$.\\
$k=0$. \\
$N_{\mbox{call}}=1$.\\
\textbf{While} $N_{\mbox{call}}\leqslant N$\\
\vspace{2mm}
\quad Compute $f(d^{k}_{\beta})$, $\beta \in \Pi^k $ (When $d^{k}_{\beta}=d^{k-1}_{\beta^{-[1]}}$, the computation was already done before step $k$).\\
%If $\beta \notin \Pi^k $, set $f^{k}_{\beta}=f^{k-1}_{\beta^{-[1]}}$.\\
\vspace{2mm}
\quad\textbf{Set} 
$f^{k}_{\beta}=f(d^{k}_{\beta})$, for every $\beta \in \Pi^k $ and $f^{k}_{\beta}=f^{k-1}_{\beta^{-[1]}}$, for every $\beta \notin \Pi^k $ .\\
\vspace{2mm}
\quad\textbf{Set} 
\begin{align*}
q^{N}_{\alpha}(f,X)=&\sup\{f(d_{\beta}^{k}) ,\beta \in \Pi^{k}, \sum_{\gamma \in \{0,\ldots,3^{k}-1\}^d,  f^{k}_{\gamma}\geqslant f(d_{\beta}^{k})} \mathbb{P}(X\in D^{k}_{\gamma}) \geqslant 1- \alpha \} \\
=&\inf\{f(d_{\beta}^{k}),\beta \in \Pi^{k}, \sum_{\gamma \in \{0,\ldots,3^{k}-1\}^d,  f_{\gamma}^{k} \leqslant f(d_{\beta}^{k})} \mathbb{P}(X\in D^{k}_{\gamma}) \geqslant \alpha \},
\end{align*}
\qquad and $\underline{q}^{N}_{\alpha}(f,X)=q^{N}_{\alpha}(f,X)-L_{f} \delta^{k}$,$\overline{q}^{N}_{\alpha}(f,X)=q^{N}_{\alpha}(f,X)+L_{f} \delta^{k}.$ \\
\vspace{2mm}
\quad \textbf{Set} $$\Pi^{k+1}=\{3 \beta+\{0,1,2\}^d,\beta \in \Pi^k , f(d^{k}_{\beta}) \in [\underline{q}^{N}_{\alpha}- L_f \delta^{k},\overline{q}^{N}_{\alpha}+ L_f \delta^{k}] \},$$
\qquad and $N_{\mbox{call}}=N_{\mbox{call}}+\frac{3^d-1}{3^d}\mbox{Card}(\Pi^{k+1})$.\\
\vspace{2mm}
\quad \textbf{Set}   \quad $k=k+1$.\\
\textbf{End While}\\
 \textbf{Return} $q^{N}_{\alpha}(f,X)$, $\underline{q}^{N}_{\alpha}(f,X)$, $\overline{q}^{N}_{\alpha}(f,X)$.
%\end{verbatim}
\end{algo}
We observe that to apply this algorithm, we need to have access to the law of $X$, or more specifically to $\mathbb{P}(X\in D^{k}_{\gamma})$, which is why this law is supposed to be known. When the probabilities $\mathbb{P}(X\in D^{k}_{\gamma})$ are unknown but $X$ may be simulated easily, one can use a Monte Carlo approach for their computation, which can be done once for all independently from the function $f$. We do not discuss further this approach in this paper. \\

The following result establishes the convergence of Algorithm \ref{algo:gen_Lip_connu} with explicit upper bounds for the errors exhibiting two regimes,  exponential or polynomial, regarding that $d=1$ or $d>1$.
\begin{theorem}
\label{th:conv_gen}
%Let $N \in \mathbb{N}^{\ast}$ and let $q^{N}_{\alpha}(f,X)$, $\underline{q}^{N}_{\alpha}$ and $\overline{q}^{N}_{\alpha}$ given by the Algorithm \ref{algo:gen_Lip_connu}.  Assume that (\ref{hyp:lipschitz}), (\ref{hyp:level_set}) and (\ref{densite_bornee}) hold. Then
Let $N \in \mathbb{N}^{\ast}$, and let $q^{N}_{\alpha}(f,X)$, $\underline{q}^{N}_{\alpha}$ and $\overline{q}^{N}_{\alpha}$ be defined as in Algorithm \ref{algo:gen_Lip_connu}.  Assume that (\ref{hyp:lipschitz}) and (\ref{hyp:level_set}) hold. Then
\begin{align}
\label{eq:int_conf_algo_gen}
 q_{\alpha}(f,X) \in [ \underline{q}^{N}_{\alpha}(f,X),\overline{q}^{N}_{\alpha}(f,X)].
\end{align}

Moreover, if $d=1$, then
\begin{align}
\label{eq:borne_algo_gen_taille_intervalle_confiance_d_1}
\vert q_{\alpha}(f,X) -q^{N}_{\alpha}(f,X) \vert %\leqslant L_{f} d^{\frac{1}{2}}\frac{3}{2} 3^{- \frac{2 (N-1)}{3^{d}M L_{f} d^{\frac{1}{2}} }}. 
\leqslant C \rho^{N},
\end{align}
with $C= \frac{1}{2} L_{f} 3^{1+ \frac{1}{4 M L_{f}  }} $ and $\rho=3^{- \frac{1}{4 M L_{f}  }}.$ \\

If $d>1$ and $N>1$, then
\begin{align}
\label{eq:borne_algo_gen_taille_intervalle_confiance}
\vert q_{\alpha}(f,X) -q^{N}_{\alpha}(f,X) \vert %\leqslant L_{f} d^{\frac{1}{2}}\frac{3}{2} ( \frac{4 (N-1)}{3^{d}M L_{f} d^{\frac{1}{2}}} )^{\frac{1}{d-1}}.
\leqslant  C( N-1)^{-\frac{1}{d-1}},
\end{align}
with $C=  \frac{3}{2} L_{f} d^{\frac{1}{2}}  (3^{d}M L_{f} d^{\frac{1}{2}})^{\frac{1}{d-1}}$.
\end{theorem}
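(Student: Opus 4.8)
The plan is to read the proof off from the two lemmas of the Preliminaries together with one new combinatorial estimate on the size of the sets $\Pi^k$. Write $k^{\ast}$ for the last level treated by Algorithm \ref{algo:gen_Lip_connu} before the loop exits. By Lemma \ref{lem:def_approx} (applied with the recycled values $f^{k}_{\beta}$), the quantity returned is $q^{N}_{\alpha}(f,X)=\mathfrak{q}^{k^{\ast}}_{\alpha}(f,X)$, and by construction $\underline{q}^{N}_{\alpha}=q^{N}_{\alpha}-L_{f}\delta^{k^{\ast}}$, $\overline{q}^{N}_{\alpha}=q^{N}_{\alpha}+L_{f}\delta^{k^{\ast}}$. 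Lemma \ref{lem:intervalle_confiance} then gives $\vert q_{\alpha}(f,X)-q^{N}_{\alpha}(f,X)\vert\le L_{f}\delta^{k^{\ast}}$, which is precisely \eqref{eq:int_conf_algo_gen}; it also shows that the two quantitative bounds will follow once $\delta^{k^{\ast}}=\tfrac{\sqrt d}{2}3^{-k^{\ast}}$ is estimated, i.e. once $k^{\ast}$ is bounded below in terms of the budget $N$.

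The key step is the cardinality estimate: for every $k\ge 0$, $\mbox{Card}(\Pi^{k+1})\le 2\cdot 3^{d}ML_{f}\sqrt d\,3^{k(d-1)}$. By \eqref{def:Pik}, $\Pi^{k+1}$ is the union of the $3^{d}$ children of the cells $D^{k}_{\beta}$, $\beta\in\Pi^{k}$, for which $f(d^{k}_{\beta})\in[\tilde{\mathfrak{q}}^{k}_{\alpha}-2L_{f}\delta^{k},\tilde{\mathfrak{q}}^{k}_{\alpha}+2L_{f}\delta^{k}]$; calling $S_{k}$ this set of indices, $\mbox{Card}(\Pi^{k+1})=3^{d}\mbox{Card}(S_{k})$. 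For $\beta\in S_{k}$, combining Lemma \ref{lem:intervalle_confiance} ($\vert\tilde{\mathfrak{q}}^{k}_{\alpha}-q_{\alpha}(f,X)\vert\le L_{f}\delta^{k}$) with the Lipschitz bound $\vert f(x)-f(d^{k}_{\beta})\vert\le L_{f}\delta^{k}$ on the whole cell gives $D^{k}_{\beta}\subseteq\{x:f(x)\in[q_{\alpha}(f,X)-4L_{f}\delta^{k},q_{\alpha}(f,X)+4L_{f}\delta^{k}]\}$. The cells in $S_{k}$ have pairwise disjoint interiors, each of Lebesgue volume $3^{-kd}$, so assumption \ref{hyp:level_set} with $\delta=4L_{f}\delta^{k}=2L_{f}\sqrt d\,3^{-k}$ yields $\mbox{Card}(S_{k})\,3^{-kd}\le 2ML_{f}\sqrt d\,3^{-k}$, hence the claim. (The bounded-density assumption \ref{densite_bornee} is not needed here; it would only allow replacing Lebesgue measure by the law of $X$.)

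It remains to convert this into the rates. Let $c_{k}$ denote the value of $N_{\mbox{call}}$ just before level $k$ is treated; the update rule gives $c_{k}=1+\tfrac{3^{d}-1}{3^{d}}\sum_{j=1}^{k}\mbox{Card}(\Pi^{j})\le 1+\sum_{j=1}^{k}\mbox{Card}(\Pi^{j})$, and level $k$ is treated exactly when $c_{k}\le N$, so $k^{\ast}=\max\{k:c_{k}\le N\}$ (and $c_{0}=1\le N$). If $d=1$ the estimate above reads $\mbox{Card}(\Pi^{j})\le 6ML_{f}$, so $c_{k}\le 1+4kML_{f}$ and $k^{\ast}\ge\lfloor(N-1)/(4ML_{f})\rfloor\ge(N-1)/(4ML_{f})-1$; plugging this into $\vert q_{\alpha}-q^{N}_{\alpha}\vert\le\tfrac{L_{f}}{2}3^{-k^{\ast}}$ gives \eqref{eq:borne_algo_gen_taille_intervalle_confiance_d_1} with exactly $C=\tfrac12 L_{f}3^{1+1/(4ML_{f})}$ and $\rho=3^{-1/(4ML_{f})}$. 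If $d>1$, set $A:=2\cdot 3^{d}ML_{f}\sqrt d$ and $r:=3^{d-1}>1$, so $\mbox{Card}(\Pi^{j})\le Ar^{j-1}$, hence $c_{k}\le 1+A\tfrac{r^{k}-1}{r-1}$ and, when $(N-1)(r-1)/A\ge 1$, $k^{\ast}\ge\log_{r}\!\big((N-1)(r-1)/A\big)-1$; inserting this into $\vert q_{\alpha}-q^{N}_{\alpha}\vert\le\tfrac{L_{f}\sqrt d}{2}3^{-k^{\ast}}$, using $3^{-k^{\ast}}=r^{-k^{\ast}/(d-1)}$ and the elementary inequality $2/(3^{d-1}-1)\le 1$ (valid for $d\ge 2$) yields \eqref{eq:borne_algo_gen_taille_intervalle_confiance} with the stated $C$. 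The residual range $(N-1)(r-1)/A<1$ is handled separately using only $k^{\ast}\ge 0$, i.e. $\vert q_{\alpha}-q^{N}_{\alpha}\vert\le L_{f}\delta^{0}=\tfrac{L_{f}\sqrt d}{2}$, together with the observation that \ref{hyp:level_set} forces $ML_{f}\sqrt d\ge 1$ (since $f([0,1]^{d})\subseteq[q_{\alpha}-L_{f}\sqrt d,q_{\alpha}+L_{f}\sqrt d]$ has measure $1$ in $x$), which makes the claimed right-hand side at least $\tfrac{L_{f}\sqrt d}{2}$ in that regime.

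The main obstacle is the cardinality bound $\mbox{Card}(\Pi^{k+1})\lesssim 3^{k(d-1)}$: this is where the dichotomic geometry, the a priori estimate of Lemma \ref{lem:intervalle_confiance}, and the level-set assumption \ref{hyp:level_set} have to be combined, and it is exactly the resulting behaviour of the per-level cost (bounded total cost when $d=1$, geometric growth $\sim 3^{(d-1)k}$ when $d>1$) that produces the two regimes. Everything afterwards is bookkeeping: tracking $N_{\mbox{call}}$ against the level and carefully discharging the floor functions and the small-$N$ corner case to land on the precise constants.
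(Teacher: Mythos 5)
Your proof is correct and follows essentially the same route as the paper's: the cardinality bound on $\Pi^{k+1}$ via disjointness of the cells, the inclusion $D^{k}_{\beta}\subseteq\{x: f(x)\in[q_{\alpha}(f,X)-4L_{f}\delta^{k},q_{\alpha}(f,X)+4L_{f}\delta^{k}]\}$ and assumption (2) is exactly the paper's Lemma on $N_{k}$ (which likewise never uses the bounded-density assumption), and the conversion of budget into depth and depth into error via Lemma \ref{lem:intervalle_confiance} reproduces the paper's two auxiliary lemmas with the same constants. The only difference is presentational (you count $\mbox{Card}(\Pi^{k+1})$ where the paper counts $N_{k}$, and you make the small-$N$ corner case for $d>1$ explicit), so nothing further is needed.
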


\begin{remark}
The above result can be extended to the case where the support of $f$ is not restricted to $[0,1]^{d}$ but to a Cartesian product of bounded intervals $\tilde{\Omega}:= [a_{1},b_{1}] \times \ldots \times [a_{d},b_{d}]$, $b_{i} > a_{i}$ for $i \in \{1,\ldots,d\}$. Indeed, if $X \in \tilde{\Omega}$, a rescaling can be used with the application of Algorithm \ref{algo:gen_Lip_connu} to approximate $q_{\alpha}(f,X)$, the quantile of order $\alpha$ of $f(X)$, for a Lipschitz function $f$ defined on $\tilde{\Omega}$. In particular, for every $x \in \tilde{\Omega}$, we can write $f(x)=g(h(x))$, with $h(x)_{i}=\frac{x_{i}-a_{i}}{b_{i}-a_{i}}$, by defining, for every $y \in [0,1]^{d}$, $g(y)=f(v(y))$ with $v(y)_{i}=a_{i}+(b_{i}-a_{i})y_{i}$. Consequently, the function $g$ is defined on $[0,1]^{d}$ and is Lipschitz with Lipschitz constant $L_{g} \leqslant c_{1} L_{f}$, with $c_{1}=\sup \{b_{i}-a_{i},i \in \{1,\ldots,d\} \}$. Moreover, if $f$ satisfies the level set assumption,
\begin{align*}
\forall \delta >0, \quad \lambda_{\mbox{Leb}}(x \in \tilde{\Omega}, f(x) \in [q_{\alpha}(f,X)-\delta,q_{\alpha}(f,X)+\delta])\leqslant M \delta, \quad M>0,
\end{align*}
then, using the change of variables formula, for every $\delta>0$, we have
\begin{align*}
\lambda_{\mbox{Leb}}(x \in [0,1]^{d}, g(x) \in & [q_{\alpha}(g,h(X))-\delta,q_{\alpha}(g,h(X))+\delta])\\
= & c_{2}\lambda_{\mbox{Leb}}(x \in \tilde{\Omega}, g(h(x)) \in [q_{\alpha}(f,X)-\delta,q_{\alpha}(f,X)+\delta]) \\
\leqslant &   c_{2}M  \delta,
\end{align*}

with $c_{2}= \prod_{i=1}^{d}(b_{i}-a_{i})$.  In conclusion $g,h(X)$ and $\alpha$ satisfy assumptions (\ref{hyp:lipschitz}) and (\ref{hyp:level_set}) and we can apply Algorithm \ref{algo:gen_Lip_connu} with the random variable $h(X)$ instead of $X$ and the function $g$ instead of $f$ to produce approximations for $q_{\alpha}(f,X)$. In particular
\begin{align*}
q_{\alpha}(f,X) = q_{\alpha}(g,h(X)) \in [\underline{q}^{N}_{\alpha}(g, h(X)),\overline{q}^{N}_{\alpha}(g,h (X))],
\end{align*}
and similar results as the those established in Theorem \ref{th:conv_gen} hold with $L_{f}$ replaced by $c_{1}L_{f}$ and $M$ replaced by $c_{2}M$ in the upper bounds.
\end{remark}

\subsection{Proof of convergence of the algorithm}
\label{sec:convergence}

Our strategy consists in identifying the maximum level $k(N)$ such that Algorithm \ref{algo:gen_Lip_connu} is sure to compute $\mathfrak{q}_{\alpha}^{k(N)}$ given an initial budget of $N$ calls to $f$. Then we can use Lemma \ref{lem:intervalle_confiance} with $k=k(N)$ and obtain the expected bounds. \\

To begin, for $k \in \mathbb{N}$, we introduce $N_{k}$ the number of calls to the function $f$ satisfying (\ref{hyp:lipschitz}), in order to compute $\mathfrak{q}_{\alpha}^{k}$.  Since each new subdivision of a set with form $D_{\beta}^{l}$ yields $3^{d}-1$ new calls to the function $f$, we have
\begin{align*}
N_{k} %=& 1+(3^{d}-1)  \sum_{l=0}^{k-1} \sum_{\gamma \in \{0,\ldots,3^{l}-1\}^d}  \mathds{1}_{ f(d_{\gamma}^{l})\in [\underline{\mathfrak{q}}_{\alpha}^{l} -L_{f}  \delta^{l}  ,\overline{\mathfrak{q}}_{\alpha}^{l}  +L_{f}  \delta^{l}  ]} \\
=& 1+(3^{d}-1)  \sum_{l=0}^{k-1} \sum_{\gamma \in \{0,\ldots,3^{l}-1\}^d}  \mathds{1}_{ f(d_{\gamma}^{l})\in [\mathfrak{q}_{\alpha}^{l} -2L_{f}  \delta^{l}  ,\mathfrak{q}_{\alpha}^{l}  +2L_{f}  \delta^{l}  ]} .
\end{align*}
In addition, we thus define $k(N):=\sup \{k \in \mathbb{N}, N_{k} \leqslant N\}$. Notice that $k(N)$ depends on $L_{f}$. When necessary, we will emphasize this dependence denoting $k(N,L_{f})$ instead of $k(N)$.\\

The next result establishes some upper bounds for $N_{k}$ with explicit dependence $w.r.t.$ $k$.
\begin{lem}
\label{lem:borne_Nk}
Let $k \in \mathbb{N}$. Assume that (\ref{hyp:lipschitz}) and (\ref{hyp:level_set}) hold. \\

\noindent If $d=1$, then
\begin{align*}
N_{k} \leqslant 1+4M L_{f} k.
\end{align*}
If $d>1$, then
\begin{align*}
N_{k} \leqslant 1+ {3^{d}}{2} M L_{f} d^{\frac{1}{2}}  \frac{3^{k(d-1)}-1}{3^{d-1}-1}.
\end{align*}
\end{lem}

\begin{proof}
First we remark that
\begin{align*}
N_{k} =& 1+(3^{d}-1)  \sum_{l=0}^{k-1} \sum_{\gamma \in \{0,\ldots,3^{l}-1\}^d}  \mathds{1}_{ f(d_{\gamma}^{l})\in [\mathfrak{q}_{\alpha}^{l} -2 L_{f}  \delta^{l}  ,\mathfrak{q}_{\alpha}^{l}  +2 L_{f}  \delta^{l}  ]} \\
= &1+ (3^{d}-1) \sum_{l=0}^{k-1} \sum_{\gamma \in \{0,\ldots,3^{l}-1\}^d}  3^{ld} \lambda_{\mbox{Leb}}(D^{l}_{\gamma}  )  \mathds{1}_{ f(d_{\gamma}^{l})\in [\mathfrak{q}_{\alpha}^{l} -2 L_{f}  \delta^{l}  ,\mathfrak{q}_{\alpha}^{l}  +2 L_{f}  \delta^{l}  ]}\\
\leqslant &1+ (3^{d}-1) \sum_{l=0}^{k-1} 3^{ld} \lambda_{\mbox{Leb}}(x \in [0,1]^{d}, f(x) \in [\mathfrak{q}_{\alpha}^{l} -3L_{f}  \delta^{l}  ,\mathfrak{q}_{\alpha}^{l}  +3L_{f}  \delta^{l}  ])   .
\end{align*}
Now, we introduce $r_{\alpha}^{l}=4L_{f}  \delta^{l}$. Then, using the level set assumption (\ref{hyp:level_set}) and that $[\mathfrak{q}_{\alpha}^{l} -3L_{f}  \delta^{l}  ,\mathfrak{q}_{\alpha}^{l}  +3L_{f}  \delta^{l}  ] \subset [q_{\alpha}(f,X)-r_{\alpha}^{l},q_{\alpha}(f,X)+r_{\alpha}^{l}])$ as a consequence of Lemma \ref{lem:intervalle_confiance}, we obtain

\begin{align*}
\lambda_{\mbox{Leb}}(x \in [0,1]^{d}, f(x) \in [\mathfrak{q}_{\alpha}^{l} -3L_{f}  \delta^{k}  ,\mathfrak{q}_{\alpha}^{l}  +3L_{f}  \delta^{l}  ]) \leqslant & M r_{\alpha}^{l},
\end{align*}
and then

\begin{align*}
N_{k} \leqslant &1+ (3^{d}-1) \sum_{l=0}^{k-1} M 3^{ld} r_{\alpha}^{l} \\
%\leqslant & 1+ \sum_{l=0}^{k-1} M 3^{(l+1)d} r_{\alpha}^{l} \\
\leqslant &1+ (3^{d}-1)2 M L_{f} d^{\frac{1}{2}}  \sum_{l=0}^{k-1} 3^{l(d-1)} .
\end{align*}
We thus conclude easily depending on the value of $d$.

%
%Using the same calculus as in the proof of Lemma \ref{lem:prob_discretisation} (recall that $r_{\alpha}^{l}=\overline{\mathfrak{q}}_{\alpha}^{l} -\underline{\mathfrak{q}}_{\alpha}^{l}  +2L_{f}  \delta^{l}$), it follows from \ref{hyp:lipschitz} and \ref{hyp:level_set} that
%\begin{align*}
%N_{k} =& 1+(3^{d}-1)  \sum_{l=0}^{k-1} \sum_{\gamma \in \{0,\ldots,3^{l}-1\}^d}  \mathds{1}_{ f(d_{\gamma}^{l})\in [\mathfrak{q}_{\alpha}^{l} -2 L_{f}  \delta^{l}  ,\mathfrak{q}_{\alpha}^{l}  +2 L_{f}  \delta^{l}  ]} \\
%\leqslant &1+ (3^{d}-1) \sum_{l=0}^{k-1} M 3^{ld} r_{\alpha}^{l} \\
%%\leqslant & 1+ \sum_{l=0}^{k-1} M 3^{(l+1)d} r_{\alpha}^{l} \\
%\leqslant &1+ (3^{d}-1)2 M L_{f} d^{\frac{1}{2}}  \sum_{l=0}^{k-1} 3^{l(d-1)} .
%\end{align*}
%and the result is straightforward.
\end{proof}

We are now in a position to derive a lower bound for $k(N)$ that depends explicitly on $N$.
\begin{lem}
\label{lemme:borne_algo_taille_intervalle_confiance}
Let $N \in \mathbb{N}^{\ast}$. Assume that (\ref{hyp:lipschitz}) and (\ref{hyp:level_set}) hold. \\

\noindent If $d=1$, then
\begin{align*}
k(N) \geqslant \lfloor \frac{N-1}{4 M L_{f} } \rfloor ,
\end{align*}

and 

\begin{align}
\label{eq:borne_algo_taille_intervalle_confiance}
\vert q_{\alpha}(f,X)- \mathfrak{q}_{\alpha}^{k(N)} \vert \leqslant \frac{3}{2} L_{f} 3^{- \frac{N-1}{4 M L_{f} }}.
\end{align}
 \begin{comment}
\begin{align}
\label{eq:borne_algo_taille_intervalle_confiance}
\vert q_{\alpha}(f,X)- \mathfrak{q}_{\alpha}^{k(N)} \vert \leqslant \frac{1}{2} L_{f} d^{\frac{1}{2}} 3^{1- \frac{N-1}{3^{d} 2 M L_{f} d^{\frac{1}{2}} }}
\end{align}
\end{comment}

If $d>1$, then
\begin{align*}
k(N)  \geqslant   \lfloor \frac{ \ln(N-1)-\ln(3^{d} M L_{f} d^{\frac{1}{2}})}{(d-1) \ln(3) } \rfloor ,
\end{align*}

and 

\begin{align}
\label{eq:borne_algo_taille_intervalle_confiance_d>1}
\vert q_{\alpha}(f,X)- \mathfrak{q}_{\alpha}^{k(N)} \vert\leqslant \frac{3}{2} L_{f} d^{\frac{1}{2}}  ( \frac{ N-1}{3^{d}M L_{f} d^{\frac{1}{2}}} )^{-\frac{1}{d-1}} .
\end{align}

\end{lem}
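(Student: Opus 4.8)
The plan is to combine Lemma \ref{lem:borne_Nk}, which bounds $N_k$ from above, with Lemma \ref{lem:intervalle_confiance}, which bounds the error $|q_\alpha(f,X)-\mathfrak q_\alpha^{k}|$ from above by $L_f\delta^k = \tfrac{1}{2}L_f d^{1/2} 3^{-k}$. The quantity $k(N)=\sup\{k\in\mathbb N, N_k\leqslant N\}$ is the deepest level the algorithm reaches on a budget of $N$ calls, so the returned value is $\mathfrak q_\alpha^{k(N)}$ and the error is controlled by $L_f\delta^{k(N)}$; everything reduces to a lower bound on $k(N)$.

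First I would handle $d=1$. By Lemma \ref{lem:borne_Nk}, $N_k\leqslant 1+4ML_f k$, so $N_k\leqslant N$ whenever $1+4ML_fk\leqslant N$, i.e. whenever $k\leqslant \frac{N-1}{4ML_f}$. Hence $k(N)\geqslant\lfloor\frac{N-1}{4ML_f}\rfloor$. Plugging into \eqref{eq:intervalle_confiance} with $d=1$ gives $|q_\alpha(f,X)-\mathfrak q_\alpha^{k(N)}|\leqslant \tfrac12 L_f 3^{-k(N)}\leqslant \tfrac12 L_f 3^{-\lfloor\frac{N-1}{4ML_f}\rfloor}$, and using $\lfloor t\rfloor\geqslant t-1$ gives the factor $3^{1}$, yielding $\tfrac32 L_f 3^{-\frac{N-1}{4ML_f}}$, which is \eqref{eq:borne_algo_taille_intervalle_confiance}.

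Next, for $d>1$, Lemma \ref{lem:borne_Nk} gives $N_k\leqslant 1+\frac{3^d}{2}ML_fd^{1/2}\frac{3^{k(d-1)}-1}{3^{d-1}-1}$. I would crudely bound $\frac{3^{k(d-1)}-1}{3^{d-1}-1}\leqslant 3^{k(d-1)}\cdot\frac{1}{3^{d-1}-1}\cdot 1$ — more simply, since $3^{d-1}-1\geqslant 3^{d-1}/2$ for $d\geqslant 2$ one can absorb constants, but the cleanest route matching the stated constant is to note $N_k - 1 \leqslant 3^d M L_f d^{1/2} 3^{k(d-1)}$ (checking that $\frac{3^d}{2}\cdot\frac{1}{3^{d-1}-1}\leqslant 3^d$, which holds). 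Thus $N_k\leqslant N$ as soon as $3^{k(d-1)}\leqslant \frac{N-1}{3^dML_fd^{1/2}}$, i.e. $k\leqslant \frac{\ln(N-1)-\ln(3^dML_fd^{1/2})}{(d-1)\ln 3}$, giving the claimed lower bound on $k(N)$. Then $|q_\alpha(f,X)-\mathfrak q_\alpha^{k(N)}|\leqslant \tfrac12 L_f d^{1/2} 3^{-k(N)}$, and $3^{-k(N)}\leqslant 3\cdot 3^{-\frac{\ln(N-1)-\ln(3^dML_fd^{1/2})}{(d-1)\ln3}} = 3\cdot\big(\frac{N-1}{3^dML_fd^{1/2}}\big)^{-1/(d-1)}$ via $\lfloor t\rfloor\geqslant t-1$, which gives \eqref{eq:borne_algo_taille_intervalle_confiance_d>1}.

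The main obstacle is purely bookkeeping: verifying that the crude inequalities used to pass from the precise bound on $N_k$ in Lemma \ref{lem:borne_Nk} to the clean exponential/power expression are valid for all $d\geqslant 2$ (in particular that the geometric-sum prefactor $\frac{3^d/2}{3^{d-1}-1}$ is dominated by $3^d$), and keeping the floor-function losses ($\lfloor t\rfloor \geqslant t-1$, costing one extra factor of $3$) consistent with the constants $\tfrac32$ appearing in the statement. There is no conceptual difficulty beyond assembling the two earlier lemmas.
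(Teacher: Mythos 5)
Your argument is correct and is exactly the paper's proof (which is stated in one line as "a straightforward application of Lemma \ref{lem:borne_Nk} together with $\delta^{k}=\frac{d^{1/2}}{2}3^{-k}$"): invert the bound on $N_k$ to lower-bound $k(N)$, apply Lemma \ref{lem:intervalle_confiance} at level $k(N)$, and absorb the floor via $\lfloor t\rfloor\geqslant t-1$, which produces the extra factor $3$ in the constants $\tfrac{3}{2}$. Your bookkeeping check that the geometric-sum prefactor divided by $3^{d-1}-1$ is dominated by $3^{d}$ for $d\geqslant 2$ is the only point requiring verification, and it holds.
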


\begin{proof}
The proof of the lower bound on $k(N)$ is a straightforward application of Lemma \ref{lem:borne_Nk} together with $\delta^{k} = \frac{d^{\frac{1}{2}}}{2} \frac{1}{3^{k}}$.  Then (\ref{eq:borne_algo_taille_intervalle_confiance}) and (\ref{eq:borne_algo_taille_intervalle_confiance_d>1}) are derived as direct applications of Lemma \ref{lem:intervalle_confiance}.
\end{proof}

\begin{proof}[Proof of Theorem \ref{th:conv_gen}].
We remark that $q_{\alpha}^{N}(f,X)  = \mathfrak{q}_{\alpha}^{k}$, $\underline{q}_{\alpha}^{N}(f,X)  = \mathfrak{q}_{\alpha}^{k}-L_{f} \delta^{k}  $ and that $\overline{q}_{\alpha}^{N} (f,X) =\mathfrak{q}_{\alpha}^{k}+L_{f} \delta^{k} $ for $k \geqslant k(N)$. Then the proof of (\ref{eq:int_conf_algo_gen}) is a consequence of Lemma \ref{lem:intervalle_confiance}. The proof \ref{eq:borne_algo_gen_taille_intervalle_confiance_d_1} and \ref{eq:borne_algo_gen_taille_intervalle_confiance} follows directly from Lemma \ref{lemme:borne_algo_taille_intervalle_confiance} (see (\ref{eq:borne_algo_taille_intervalle_confiance}) and (\ref{eq:borne_algo_taille_intervalle_confiance_d>1})). 
\end{proof}

\section{Computation of quantile with unknown Lipschitz constant}\label{sec:Lip_unknown}

In this Section, we design Algorithm \ref{algo:gen_Lip_inconnu} which is adapted to the computation of $q_{\alpha}(f,X)$ when the Lipschitz constant of $f$ is unknown and prove its convergence in Theorem \ref{th:conv_gen_unknown}. Similarly as when the Lipischitz constant is known, we observe two regimes (exponential or polynomial) for the error rate regarding the value of $d$.

\subsection{Preliminaries}
%Let us fix $j_{max} \in \mathbb{N}$ and for every $j \in \{0,\ldots,j_{max}\}$ let us introduce the following quantities.   
We suppose in this section that $L_{f}$ is unknown and for $j \in \mathbb{N}$ we introduce
\begin{align*}
L_{f}(j)=3^{j},
\end{align*}
as the candidates for the unknown Lipischitz constant $L_{f}$ of $f$. Let us consider a given global budget $N \in \mathbb{N}$, $N \geqslant \frac{\pi^{2}}{6}$, and introduce $N_{max}(j,N):=\lfloor \frac{6 N}{\pi^{2}(j+1)^{2} } \rfloor$ the respective budgets allocated for computations that mimic Algorithm \ref{algo:gen_Lip_connu} in the case the true Lipschitz constant is $L_{f}(j)$.  We also denote  $j_{max}(N)=\sup\{j \in \mathbb{N},N_{max}(j,N) \geqslant 1\}=\sup\{j \in \mathbb{N}, j \leqslant  \frac{\sqrt{6 N}}{\pi} -1\}=\lfloor \frac{\sqrt{6 N}}{\pi} \rfloor-1$.\\

Let us define $\Pi^{\diamond,0}(j)=\{(0,\ldots,0)\}=\{0^{d}\}$, and $\tilde{\mathfrak{q}}^{0}_{\alpha}(f,X)=f(d^{0}_{\{ 0 \}^{d}})$. Assume that for $k \in \mathbb{N}$,  we compute (if necessary) $f^{k}_{\beta}:=f(d^{k}_{\beta})$, $\beta \in  \cup_{j=1}^{j_{max}}  \Pi^{\diamond,k} (j)$ and if $\beta \notin   \cup_{j=1}^{j_{max}}  \Pi^{\diamond,k} (j)$, no new computation is done and we choose $f^{k}_{\beta} =f^{k-1}_{\beta^{-[1]}}$.\\

%\vspace{2mm}
We now define recursively
\begin{align} 
\label{def:approx_unknown}
\mathfrak{q}^{\diamond,k}_{\alpha}(f,X)=\sup\{f(d_{\beta}^{k}),\beta \in \cup_{j=1}^{j_{max}(N)}  \Pi^{\diamond,k} (j),\sum_{\gamma \in \{0,\ldots,3^{k}-1\}^d,  f_{\gamma}^{k}\geqslant f(d_{\beta}^{k})} \mathbb{P}(X\in D^{k}_{\gamma}) \geqslant 1- \alpha \} ,
\end{align}
and
\begin{align*}
\Pi^{\diamond,k+1}(j)=\{3 \beta+\{0,1,2\}^d,\beta \in \Pi^{\diamond,k}(j) , f(d^{k}_{\beta}) \in [\mathfrak{q}^{\diamond,k}_{\alpha}- 2L_f (j)\delta^{k},\mathfrak{q}^{\diamond,k}_{\alpha}+ 2 L_f(j) \delta^{k}] \} ,
\end{align*}
%if $N_{max}(j,N) \geqslant \sum_{h=0}^{k+1} \frac{3^{d}-1}{3^{d}}\mbox{card}(\Pi^{h}(j))$ and 
%\begin{align*}
%\Pi^{k+1}(j)=\{ \beta  \in \{0,\ldots,3^{k+1}-1\}^d, \beta^{-[1]} \in \Pi^{k}(j) \}
%\end{align*}
when $k+1\leqslant \ell(j,N)$, and 
\begin{align*}
\Pi^{\diamond,k+1}(j)=3\Pi^{\diamond,k}(j)+(1,\ldots,1),
\end{align*}
otherwise.

Notice that similarly to the known Lipischitz constant case, we have
\begin{align} 
\label{def:approx_unknown_bis}
\mathfrak{q}^{\diamond,k}_{\alpha}(f,X)=\inf\{f(d_{\beta}^{k}),\beta \in \cup_{j=1}^{j_{max}(N)}  \Pi^{\diamond,k} (j),\sum_{\gamma \in \{0,\ldots,3^{k}-1\}^d,  f_{\gamma}^{k}\leqslant f(d_{\beta}^{k})} \mathbb{P}(X\in D^{k}_{\gamma}) \geqslant \alpha \},
\end{align}
and $\mathfrak{q}^{\diamond,k}_{\alpha}(f,X)$ is the value returned by our algorithm if it has enough budget to reach level of subdivision $k$ for, at least, one candidate $j \in \{1,\ldots, j_{max}(N)\}$.  

\subsection{Algorithm and Main result}
We now introduce our algorithm designed for the case where the Lipschitz constant  $L_{f}$ of $f$ is unknown. In line with the approach used when the Lipschitz constant is known, the algorithm aims to compute $\mathfrak{q}^{\diamond,k}_{\alpha}(f,X)$ for the highest possible attainable subdivision level $k$.

\begin{algo}
\label{algo:gen_Lip_inconnu}  \quad \\
\noindent \textbf{Fix} $N \in \mathbb{N}^{\ast}$ the maximum number of calls to $f$ \\
$\Pi^{0}=\{(0,\ldots,0)\}$.\\
$k=0$ \\
$J=\{1,\ldots,j_{max}(N)\}$.\\
\textbf{Set} $N_{\mbox{call}}(j)=1$ for all $j \in J$.\\
%\vspace{2mm}
\textbf{While} $J \neq \emptyset$\\
%\textbf{While} $\exists j,  N_{\mbox{call}}(j) <N_{max}(j,N)$\\
\vspace{2mm}
\quad Compute $f(d^{k}_{\beta})$, $\beta \in \cup_{j=1}^{j_{max}(N)}\Pi^{\diamond,k} (j)$ (When $d^{k}_{\beta}=d^{k-1}_{\beta^{-[1]}}$, the computation was already done before step $k$).\\
%If $\beta \notin \Pi^k $, set $f^{k}_{\beta}=f^{k-1}_{\beta^{-[1]}}$.\\
\vspace{2mm}
\quad \textbf{Set} 
$f^{k}_{\beta}=f(d^{k}_{\beta})$, for every $\beta \in \cup_{j=1}^{j_{max}}\Pi^{\diamond,k} (j) $ and $f^{k}_{\beta}=f^{k-1}_{\beta^{-[1]}}$, for every $\beta \notin \cup_{j=1}^{j_{max}}\Pi^{\diamond,k} (j) $ .\\
\vspace{2mm}
\quad\textbf{Set} 
\begin{align*}
q^{\diamond,N}_{\alpha}(f,X)=&\sup\{f(d_{\beta}^{k}) ,\beta \in  \cup_{j=1}^{j_{max}(N)}\Pi^{\diamond,k} (j), \sum_{\gamma \in \{0,\ldots,3^{k}-1\}^d,  f^{k}_{\gamma}\geqslant f(d_{\beta}^{k})} \mathbb{P}(X\in D^{k}_{\gamma}) \geqslant 1- \alpha \}\\
=& \inf\{f(d_{\beta}^{k}) ,\beta \in \cup_{j=1}^{j_{max}(N)}\Pi^{\diamond,k} (j), \sum_{\gamma \in \{0,\ldots,3^{k}-1\}^d,  f_{\gamma}^{k}\leqslant f(d_{\beta}^{k})} \mathbb{P}(X\in D^{k}_{\gamma}) \geqslant \alpha \}.
\end{align*}
\vspace{2mm}
\quad \textbf{For} $j \in J$ \\
\vspace{2mm}
\qquad \textbf{Set} \quad
$\Pi^{\diamond,k+1}(j)=\{3 \beta+\{0,1,2\}^d,\beta \in \Pi^{\diamond,k}(j) , f(d^{k}_{\beta}) \in [q^{\diamond,N}_{\alpha}- 2L_f (j)\delta^{k},q^{\diamond,N}_{\alpha}+ 2 L_f(j) \delta^{k}] \}$.\\
\vspace{2mm}
\qquad  $N_{\mbox{call}}(j)=N_{\mbox{call}}(j)+\frac{3^d-1}{3^d}\mbox{Card}(\Pi^{\diamond,k+1}(j))$. \\
\vspace{2mm}
\qquad \textbf{Set} \textbf{If} $N_{\mbox{call}}(j)>N_{max}(j,N)$. \\
%$$N_{\mbox{call}}(j)=N_{\mbox{call}}(j)-\frac{3^d-1}{3^d}\mbox{Card}(\Pi^{k+1}(j))$$
%\vspace{2mm}
%\qquad \qquad \quad $ \Pi^{\diamond,k+1}(j)=3 \Pi^{\diamond,k}(j)+(1,\ldots,1)$.\\
\vspace{2mm}
\qquad \qquad \quad $J=J \setminus \{j\}$.\\
\vspace{2mm}
\quad \textbf{For} $j \in \{1,\ldots,j_{max}(N)\} \setminus J$ \\
\vspace{2mm}
\vspace{2mm}
\qquad \quad $ \Pi^{\diamond,k+1}(j)=3 \Pi^{\diamond,k}(j)+(1,\ldots,1)$.\\
\vspace{2mm}
\quad \textbf{Set}  $k=k+1$. \\
%\vspace{2mm}
\textbf{End While}\\
%\vspace{2mm}
\textbf{Return} $q^{\diamond,N}_{\alpha}(f,X)$.

\begin{comment}
\quad \textbf{For} $j \in \{0,\ldots,j_{max}(N) \}$ \\
\vspace{2mm}
\qquad \textbf{Set} 
$$\tilde{\Pi}^{\diamond,k+1}(j)=\{3 \beta+\{0,1,2\}^d,\beta \in \Pi^{\diamond,k}(j) , f(d^{k}_{\beta}) \in [q^{\diamond,N}_{\alpha}- 2L_f (j)\delta^{k},q^{\diamond,N}_{\alpha}+ 2 L_f(j) \delta^{k}] \}$$
\vspace{2mm}
\qquad  $N_{\mbox{call}}(j)=N_{\mbox{call}}(j)+\frac{3^d-1}{3^d}\mbox{Card}(\tilde{\Pi}^{\diamond,k+1}(j))$ \\
\vspace{2mm}
\qquad \textbf{Set} If $N_{\mbox{call}}(j)>N_{max}(j,N)$
%$$N_{\mbox{call}}(j)=N_{\mbox{call}}(j)-\frac{3^d-1}{3^d}\mbox{Card}(\Pi^{k+1}(j))$$
$$ \Pi^{\diamond,k+1}(j)=3 \Pi^{\diamond,k}(j)+(1,\ldots,1)$$.
\vspace{2mm}
\quad $k=k+1$. \\
\textbf{Return} $q^{\diamond,N}_{\alpha}(f,X)$.
%\end{verbatim}
\end{comment}
\end{algo}

The following result establishes the convergence of Algorithm \ref{algo:gen_Lip_inconnu} with explicit upper bound for the error. We still exhibit exponential or polynomial regimes regarding that $d=1$ or $d>1$. Nevertheless, as expected, the constants obtained (denoted $\rho$ and $C$ in Theorem \ref{th:conv_gen} and Theorem \ref{th:conv_gen_unknown}) provide a faster convergence to zero for the error bound when the Lipschitz constant is known.

\begin{theorem}
\label{th:conv_gen_unknown}
%Let $N \in \mathbb{N}^{\ast}$ and let $q^{\diamond,N}_{\alpha}(f,X)$ given by the Algorithm \ref{algo:gen_Lip_inconnu}.  Assume that  \ref{hyp:lipschitz}, \ref{hyp:level_set} and \ref{densite_bornee} hold with $L_{f} \geqslant 1$. \\
Let $N \in \mathbb{N}^{\ast}$, and let $q^{\diamond,N}_{\alpha}(f,X)$  be defined as in Algorithm \ref{algo:gen_Lip_inconnu}.  Assume that  (\ref{hyp:lipschitz}) and (\ref{hyp:level_set}) hold with $L_{f} \geqslant 1$. \\

If $d=1$, then
%\begin{align}
%\label{eq:borne_algo_gen_taille_intervalle_confiance_d_1}
%\vert q_{\alpha}(f,X) -q^{\diamond,N}_{\alpha}(f,X) \vert 
%\leqslant 6 L_{f}(j^{\ast})   \d^{\frac{1}{2}}3^{ \frac{1}{3^{d} 2 M L_{f}(j^{\ast}) d^{\frac{1}{2}} }  } 3^{- \frac{N \pi^{2} } { (j^{\ast}+1)^{2}  3^{d} 12 M L_{f}(j^{\ast}) d^{\frac{1}{2}} }  }
%\end{align}

\begin{align*}
%\label{eq:borne_algo_gen_taille_intervalle_confiance_d_1_unknown}
\vert q_{\alpha}(f,X) -q^{\diamond,N}_{\alpha}(f,X) \vert 
\leqslant  C \rho^{N} ,
\end{align*}
with $C=18 L_{f} 3^{ \frac{1}{2 M L_{f} }  } $ and $\rho=3^{- \frac{1} { (\frac{\ln( L_{f})}{\ln(3)} +2)^{2}  2  \pi^{2} M L_{f} }  }$.\\

\begin{comment}
\begin{align*}
%\label{eq:borne_algo_gen_taille_intervalle_confiance_d_1_unknown}
\vert q_{\alpha}(f,X) -q^{\diamond,N}_{\alpha}(f,X) \vert 
\leqslant  C \rho^{N}
\end{align*}
with $C=18 L_{f}  \d^{\frac{1}{2}}3^{ \frac{1}{3^{d} 2 M L_{f}d^{\frac{1}{2}} }  } $ and $\rho=3^{- \frac{1} { (\frac{\ln( L_{f})}{\ln(3)} +2)^{2}  3^{d}  \pi^{2} M L_{f} d^{\frac{1}{2}} }  }$.\\
\end{comment}

If $d>1$, and $N> \frac{\pi^{2}}{3}(\frac{\ln( L_{f})}{\ln(3)}+2)^{2}$, then

%
%\begin{align}
%\label{eq:borne_algo_gen_taille_intervalle_confiance}
%\vert q_{\alpha}(f,X) -q^{\diamond,N}_{\alpha}(f,X) \vert %\leqslant L_{f} d^{\frac{1}{2}}\frac{3}{2} ( \frac{4 (N-1)}{3^{d}M L_{f} d^{\frac{1}{2}}} )^{\frac{1}{d-1}}.
%\leqslant 6 L_{f}(j^{\ast})   \d^{\frac{1}{2}}3^{   \frac{\ln(3^{d} M L_{f}(j^{\ast} )d^{\frac{1}{2}})}{(d-1) \ln(3) }} (\frac{N \pi^{2} }{6(j^{\ast}+1)^{2} }-1)^{- \frac{1}{(d-1) } }
%\end{align}

\begin{align*}
%\label{eq:borne_algo_gen_taille_intervalle_confiance_unknown}
\vert q_{\alpha}(f,X) -q^{\diamond,N}_{\alpha}(f,X) \vert 
\leqslant C (N-\frac{\pi^{2}}{3}(\frac{\ln( L_{f})}{\ln(3)}+2)^{2})^{- \frac{1}{d-1 } } ,
\end{align*}
with $C= 18 L_{f}  d^{\frac{1}{2}}( 3^{d} M L_{f}d^{\frac{1}{2}}  \frac{\pi^{2}}{2}(\frac{\ln( L_{f})}{\ln(3)}+2)^{2})^{ \frac{1}{d-1 } }$. 

\begin{comment}
If $d>1$, and $N> \frac{\pi^{2}}{6}(\frac{\ln( L_{f})}{\ln(3)}+2)^{2}$,
\begin{align*}
%\label{eq:borne_algo_gen_taille_intervalle_confiance_unknown}
\vert q_{\alpha}(f,X) -q^{\diamond,N}_{\alpha}(f,X) \vert 
\leqslant C (N-\frac{\pi^{2}}{6}(\frac{\ln( L_{f})}{\ln(3)}+2)^{2})^{- \frac{1}{d-1 } }
\end{align*}
with $C= 18 L_{f}  \d^{\frac{1}{2}}3^{   \frac{\ln(3^{d+1} M L_{f}d^{\frac{1}{2}})}{(d-1) \ln(3) }} ( \frac{\pi^{2}}{6}(\frac{\ln( L_{f})}{\ln(3)}+2)^{2})^{ \frac{1}{d-1 } }$. 
\end{comment}
\end{theorem}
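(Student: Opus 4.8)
The plan is to reduce the unknown-Lipschitz-constant case to the known case analyzed in Theorem \ref{th:conv_gen} and Lemma \ref{lemme:borne_algo_taille_intervalle_confiance}. First I would pick the distinguished index $j^{\ast}:=\lceil \ln(L_f)/\ln(3)\rceil$, so that $L_f(j^{\ast})=3^{j^{\ast}}$ is an upper bound for $L_f$ with $L_f \leqslant L_f(j^{\ast}) \leqslant 3 L_f$. The monotonicity remark that $\Pi^{k}_{L_f}\subset \Pi^{k}_{L'}$ for $L'\geqslant L_f$ (and the companion fact from Lemma \ref{lem:def_approx} that $\tilde{\mathfrak{q}}^{k}_{L,\alpha}=\tilde{\mathfrak{q}}^{k}_{L_f,\alpha}$ for $L\geqslant L_f$) tells us that the sub-computation indexed by $j^{\ast}$ inside Algorithm \ref{algo:gen_Lip_inconnu} behaves exactly like Algorithm \ref{algo:gen_Lip_connu} run with the valid Lipschitz constant $L_f(j^{\ast})$ and budget $N_{max}(j^{\ast},N)$. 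Hence the level of discretization $k$ reached by that sub-computation is at least $k(N_{max}(j^{\ast},N),L_f(j^{\ast}))$, and since $\mathfrak{q}^{\diamond,k}_{\alpha}(f,X)$ aggregates over all $j$, the global output reaches at least that level as well; combining with the confidence-interval bound $\vert q_{\alpha}(f,X)-\mathfrak{q}^{\diamond,k}_{\alpha}\vert \leqslant L_f \delta^{k}$ (Lemma \ref{lem:intervalle_confiance}, valid since $L_f$ is a genuine Lipschitz constant) gives the error estimate in terms of $k(N_{max}(j^{\ast},N),L_f(j^{\ast}))$.

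The second step is purely arithmetic: plug the explicit lower bounds for $k(N,L)$ from Lemma \ref{lemme:borne_algo_taille_intervalle_confiance} with $N$ replaced by $N_{max}(j^{\ast},N)=\lfloor 6N/(\pi^{2}(j^{\ast}+1)^{2})\rfloor$ and $L$ replaced by $L_f(j^{\ast})\in[L_f,3L_f]$. For $d=1$ this yields $k \geqslant \lfloor (N_{max}(j^{\ast},N)-1)/(4ML_f(j^{\ast}))\rfloor$, and feeding this into $\delta^{k}=\tfrac12 3^{-k}$ produces an exponential bound of the form $C\rho^{N}$; tracking constants, using $j^{\ast}+1 \leqslant \ln(L_f)/\ln(3)+2$ and $L_f(j^{\ast})\leqslant 3L_f$, gives exactly $\rho=3^{-1/((\ln(L_f)/\ln(3)+2)^{2}\,2\pi^{2}ML_f)}$ and $C=18L_f 3^{1/(2ML_f)}$ after bounding the floor functions and the $-1$ shifts crudely. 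For $d>1$ one uses instead the bound $k(N) \geqslant \lfloor (\ln(N-1)-\ln(3^{d}ML_f d^{1/2}))/((d-1)\ln 3)\rfloor$ applied to $N_{max}(j^{\ast},N)$, which after inserting into $\delta^{k}$ gives a polynomial bound $(N_{max}(j^{\ast},N)-1)^{-1/(d-1)}$ up to constants; re-expressing $N_{max}(j^{\ast},N)-1 \geqslant c(N - \tfrac{\pi^{2}}{3}(\ln(L_f)/\ln 3+2)^{2})$ (handling the floor and the requirement $N>\tfrac{\pi^{2}}{3}(\ln(L_f)/\ln 3+2)^{2}$ so the quantity is positive) delivers the stated $C$ and exponent.

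The main obstacle I anticipate is not conceptual but bookkeeping: one must carefully verify that the sub-computation at index $j^{\ast}$ genuinely never gets truncated before reaching level $k(N_{max}(j^{\ast},N),L_f(j^{\ast}))$ — i.e. that the stopping rule $N_{\mbox{call}}(j^{\ast})>N_{max}(j^{\ast},N)$ in Algorithm \ref{algo:gen_Lip_inconnu} fires no earlier than the analogous rule in Algorithm \ref{algo:gen_Lip_connu} with budget $N_{max}(j^{\ast},N)$. This requires arguing that $N_{\mbox{call}}(j^{\ast})$ in the unknown-constant algorithm equals $N_{k}$ computed with $L_f(j^{\ast})$ as Lipschitz constant, which in turn rests on the claim that $q^{\diamond,N}_{\alpha}$ restricted to the relevant levels coincides with $\mathfrak{q}^{k}_{L_f(j^{\ast}),\alpha}$; this coincidence follows from Lemma \ref{lem:def_approx} and the inclusion $\Pi^{k}_{L_f}\subset\Pi^{\diamond,k}(j^{\ast})\subset\Pi^{k}_{L_f(j^{\ast})}$, but the argument that the sup defining $q^{\diamond,N}_{\alpha}$ over the union $\cup_j \Pi^{\diamond,k}(j)$ still equals the one over the single valid index must be spelled out. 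Once that identification is in place, the remaining estimates reduce to the computations already done in Section \ref{sec:convergence}, and the extra factor essentially amounts to the overhead $\tfrac{\pi^{2}}{6}(j^{\ast}+1)^{2}$ coming from splitting the budget across the $j_{max}(N)+1$ parallel runs.
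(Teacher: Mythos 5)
There is a genuine gap in your reduction, and it sits exactly where the paper has to work hardest. You argue that the $j^{\ast}$-branch of Algorithm \ref{algo:gen_Lip_inconnu} reaches level $\ell(j^{\ast},N)=k(N_{max}(j^{\ast},N),L_{f}(j^{\ast}))$ and then invoke the bound $\vert q_{\alpha}(f,X)-\mathfrak{q}^{\diamond,k}_{\alpha}\vert\leqslant L_{f}\delta^{k}$ from Lemma \ref{lem:intervalle_confiance}. But that lemma concerns $\mathfrak{q}^{k}_{\alpha}$, the known-constant estimator, and the algorithm does \emph{not} return the level-$\ell(j^{\ast},N)$ value: the while loop runs until \emph{every} budget $N_{\mbox{call}}(j)$ is exhausted, so the final level $k_{\mathrm{final}}$ is typically strictly larger than $\ell(j^{\ast},N)$ (the branches with small candidates $L_{f}(j)$, $j<j^{\ast}$, prune aggressively and survive to deeper levels). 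Beyond level $\ell(j^{\ast},N)$ the $j^{\ast}$-branch is frozen (its cells are only trivially refined via $3\Pi+(1,\ldots,1)$), the sup/inf defining $\mathfrak{q}^{\diamond,k}_{\alpha}$ ranges over the union $\cup_{j}\Pi^{\diamond,k}(j)$ which contains points contributed by branches using \emph{invalid} (too small) Lipschitz candidates, and the inequality $\vert q_{\alpha}(f,X)-\mathfrak{q}^{\diamond,k}_{\alpha}\vert\leqslant L_{f}\delta^{k}$ is simply false in general for $k>\ell(j^{\ast},N)$ — were it true, allocating budget to wrong candidates would keep shrinking the error indefinitely. The correct statement, which is the entire content of the paper's Lemma \ref{lem:intervalle_confiance_unknown}, is the saturating bound $\vert q_{\alpha}(f,X)-\mathfrak{q}_{\alpha}^{\diamond,k}\vert\leqslant 4L_{f}(j^{\ast})\delta^{\min(k,\ell(j^{\ast},N))}$; proving it for $k>\ell(j^{\ast},N)$ requires (i) ruling out that the sup is attained at a point of $\Pi^{\diamond,\ell(j^{\ast},N)}(j)\setminus\Pi^{\diamond,\ell(j^{\ast},N)}(j^{\ast})$ for $j>j^{\ast}$, via the probability-mass comparison in the paper, and (ii) showing the estimator drifts by at most $3L_{f}(j^{\ast})\delta^{\ell(j^{\ast},N)}$ at subsequent levels. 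You flag the coincidence of the sup over the union with the sup over the single valid index as something "to be spelled out," but this is not bookkeeping — it is the proof.

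Two smaller remarks. First, your case $k\leqslant\ell(j^{\ast},N)$ is handled correctly by the monotonicity $\Pi^{k}_{L_{f}}\subset\Pi^{\diamond,k}(j^{\ast})\subset\cup_{j}\Pi^{\diamond,k}(j)$ together with Lemma \ref{lem:def_approx}, and your arithmetic in the second paragraph (splitting the budget as $N_{max}(j^{\ast},N)$, bounding $j^{\ast}+1\leqslant\ln(L_{f})/\ln(3)+2$ and $L_{f}(j^{\ast})\leqslant 3L_{f}$) matches the paper's. Second, the constants in the theorem ($C=18L_{f}\cdots$) arise precisely from the factor $4L_{f}(j^{\ast})\leqslant 12L_{f}$ in the saturating bound, not from $L_{f}\delta^{k}$; with your (unjustified) bound you would not reproduce them. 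You also omit the degenerate case $j^{\ast}>j_{max}(N)$, which the paper disposes of separately via the raw Lipschitz property.
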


\subsection{Proof of convergence of the algorithm}
In this section, we focus on the proof of Theorem \ref{th:conv_gen_unknown}. Our strategy involves selecting the Lipschitz constant candidate which is the closest to the true Lipschitz constant from above. By leveraging the budget allocated to this candidate and the fact that computations for other candidates do not affect convergence, we derive the expected result.\\

Recalling that $L_{f}(j)=3^{j}$, $j \in \mathbb{N}$, we define the (unknown) quantity $j^{\ast} \in \mathbb{N} $, such that $L_{f}(j^{\ast}-1) < L_{f} \leqslant L_{f}(j^{\ast}) $ with convention $L_{f}(-1) =0$. In particular, $ \frac{\ln(L_{f})}{\ln(3)}\leqslant j^{\ast} <  \frac{\ln(3 L_{f})}{\ln(3)} $ since $L_{f} \geqslant 1$. % and $ \frac{\ln(L_{f})}{\ln(3)}\leqslant j^{\ast} =0$ if $L_{f} \in (0,1]$  \\

The proof of Theorem \ref{th:conv_gen_unknown} is a consequence of Lemma \ref{lem:intervalle_confiance_unknown} thereafter, combined with the fact that $L_{f}(j^{\ast}) \in [L_{f},3 L_{f})$. Indeed, let us introduce $ \ell(j,N) := k(N_{max}(j,N),L_{f}(j))$ when $j \in \{0,\ldots,j_{max}(N)\}$ ($ \ell(j,N):=0$ for $j>j_{max}(N)$). Then, given a budget of $N_{max}(j,N)$ for the Lipschitz constant candidate $L_{f}(j)$, it follows from Lemma \ref{lemme:borne_algo_taille_intervalle_confiance} that the highest level $k$ attained for subdivision of the space satisfies $k \geqslant  \ell(j,N)$. Assuming this highest level is exactly $\ell(j,N)$, the proof of Theorem \ref{th:conv_gen_unknown} is a direct consequence of Lemma \ref{lem:intervalle_confiance_unknown}. When this level is higher, the proof is similar and left to the reader.

\begin{lem}
\label{lem:intervalle_confiance_unknown}
Assume that (\ref{hyp:lipschitz}) holds.  Then, for every $k \in \mathbb{N}$, we have
\begin{align}
\label{eq:intervalle_confiance_unkown}
\vert q_{\alpha}(f,X)- \mathfrak{q}_{\alpha}^{\diamond,k} \vert \leqslant 4L_{f}(j^{\ast}) \delta^{ \min (k, \ell(j^{\ast},N))} .
\end{align}

\end{lem}

\begin{proof}
First, let us consider the case $j^{\ast} > j_{max}(N)$. In this case, $ \mathfrak{q}_{\alpha}^{\diamond,k} \in f([0,1]^{d})$ and the result follows from the Lipschitz property of $f$ since $\ell(j^{\ast},N)=0$. \\ 
Now let us assume that $j^{\ast}  \in \{0,\ldots,j_{max}(N)\}$.
Let us first consider the case $k \leqslant \ell(j^{\ast},N)$. We remark that (using notation from (\ref{def:Pik})), $\Pi_{L_{f}}^{k} \subset \Pi_{L_{f}(j^{\ast})}^{k} =\Pi^{k}(j^{\ast}) \subset \cup_{j=0}^{j_{max}}\Pi^{k}(j)$. In particular, it follows that $ \mathfrak{q}_{\alpha}^{\diamond,k}(f,X) = \mathfrak{q}_{L_{f}(j^{\ast}),\alpha}^{k}(f,X)=\mathfrak{q}_{L_{f},\alpha}^{k}(f,X) = \mathfrak{q}_{\alpha}^{k}(f,X)$, and the result is a direct consequence of Lemma \ref{lem:intervalle_confiance}. \\
We now consider $k >\ell(j^{\ast},N)$. We first notice that if $j > j^{\ast}$, $\ell(j^{\ast},N)\geqslant \ell(j,N)$ and it is impossible that $\mathfrak{q}^{\diamond, \ell(j^{\ast},N)}_{\alpha}(f,X) =f(d_{\gamma}^{\ell(j^{\ast},l)})$ for $\gamma \in \Pi^{\diamond,\ell(j^{\ast},N)}(j) \setminus \Pi^{\diamond,\ell(j^{\ast},N)}(j^{\ast})$. Let us now show that for $k >\ell(j^{\ast},N)$, it is impossible that $\mathfrak{q}^{\diamond, k}_{\alpha}(f,X) =f(d_{\gamma_{\ast}}^{\ell(j^{\ast},l)})$ for $\gamma_{\ast} \in \Pi^{\diamond,\ell(j^{\ast},N)}(j) \setminus \Pi^{\diamond,\ell(j^{\ast},N)}(j^{\ast})$. Indeed,  since $\gamma_{\ast} \notin  \Pi^{\diamond,\ell(j^{\ast},N)}(j^{\ast})$, we have 
\begin{align*}
\vert \mathfrak{q}^{\diamond, \ell(j^{\ast},l)-1}_{\alpha}(f,X) -f(d_{\gamma_{\ast}^{-[1]}}^{\ell(j^{\ast},l)-1}) \vert > 2 L_{f}(j^{\ast}) \delta^{\ell(j^{\ast},l)-1}.
\end{align*}
Moreover, let us notice that $(\mathfrak{q}_{L_{f}(j^{\ast}),\alpha}^{k}-L_{f}(j^{\ast})\delta^{k})_{k \in \mathbb{N}^{\ast}}$ is increasing while $(\mathfrak{q}_{L_{f}(j^{\ast}),\alpha}^{k}+L_{f}(j^{\ast})\delta^{k})_{k \in \mathbb{N}^{\ast}}$ is decreasing. In particular, we have
\begin{align*}
\vert \mathfrak{q}^{\diamond, \ell(j^{\ast},l)}_{\alpha}(f,X) -  \mathfrak{q}^{\diamond, \ell(j^{\ast},N)-1}_{\alpha}(f,X) \vert \leqslant 2 L_{f}(j^{\ast}) \delta^{\ell(j^{\ast},N)} .
\end{align*}
Using the Lipischitz property of $f$ yields
\begin{align*}
\vert f(d_{\gamma_{\ast}^{-[1]}}^{\ell(j^{\ast},l)-1}) -f(d_{\gamma_{\ast}}^{\ell(j^{\ast},l)}) \vert \leqslant  2 L_{f}(j^{\ast}) \delta^{\ell(j^{\ast},N)},
\end{align*}
so, using the triangle inequality, we conclude that
\begin{align*}
\vert \mathfrak{q}^{\diamond, \ell(j^{\ast},l)}_{\alpha}(f,X) -f(d_{\gamma_{\ast}}^{\ell(j^{\ast},l)}) \vert > 2 L_{f} \delta^{\ell(j^{\ast},N)}.
\end{align*}
We now assume that $\mathfrak{q}^{\diamond, \ell(j^{\ast},l)}_{\alpha}(f,X)\leqslant f(d_{\gamma_{\ast}}^{\ell(j^{\ast},l)}) $. It follows that
\begin{align*}
 \sum_{\gamma \in \{0,\ldots,3^{k}-1\}^d,  f^{k}_{\gamma}\geqslant f(d_{\gamma_{\ast}}^{\ell(j^{\ast},l)})} \mathbb{P}(X\in D^{k}_{\gamma})  \leqslant &  \sum_{\gamma \in \{0,\ldots,3^{k}-1\}^d,  f^{\ell(j^{\ast},N)}_{\gamma^{-[k-\ell(j^{\ast},l)]}} + L_{f}(j^{\ast}) \delta^{\ell(j^{\ast},l)} \geqslant f(d_{\gamma_{\ast}}^{\ell(j^{\ast},l)})} \mathbb{P}(X\in D^{k}_{\gamma})   \\
\leqslant &  \sum_{\gamma \in \{0,\ldots,3^{k}-1\}^d,  f^{\ell(j^{\ast},N)}_{\gamma^{-[k-\ell(j^{\ast},l)]}xt}\geqslant f(d_{\gamma_{\ast}}^{\ell(j^{\ast},l)}) - L_{f}(j^{\ast}) \delta^{\ell(j^{\ast},l)} } \mathbb{P}(X\in D^{k}_{\gamma})   \\
\leqslant &  \sum_{\gamma \in \{0,\ldots,3^{k}-1\}^d,  f^{\ell(j^{\ast},N)}_{\gamma^{-[k-\ell(j^{\ast},l)]}} > \mathfrak{q}^{\diamond, \ell(j^{\ast},l)}_{\alpha}(f,X) } \mathbb{P}(X\in D^{k}_{\gamma})  \\
= &  \sum_{\gamma \in \{0,\ldots,3^{\ell(j^{\ast},N)}-1\}^d,  f^{\ell(j^{\ast},N)}_{\gamma} > \mathfrak{q}^{\diamond, \ell(j^{\ast},l)}_{\alpha}(f,X) } \mathbb{P}(X\in D^{k}_{\gamma})  \\
< & 1- \alpha,
\end{align*}
where the last inequality is a direct consequence of definition (\ref{def:approx_unknown}), and we conclude that $\mathfrak{q}^{\diamond, k}_{\alpha}(f,X) \neq f(d_{\gamma_{\ast}}^{\ell(j^{\ast},l)})$. When $\mathfrak{q}^{\diamond, \ell(j^{\ast},l)}_{\alpha}(f,X)\geqslant f(d_{\gamma_{\ast}}^{\ell(j^{\ast},l)}) $, we use the same approach, but apply definition (\ref{def:approx_unknown_bis}) instead of (\ref{def:approx_unknown}).
Now, we remark that for every $j \in \{0,\ldots,j^{\ast}\}$, we have $\Pi^{\diamond,\ell(j^{\ast})}(j) \subset  \Pi^{\diamond,\ell(j^{\ast})}(j^{\ast})$. It follows from the previous observation that for every $k \geqslant \ell(j^{\ast},N)$, 
%and $\gamma \in \Pi^{\ast,k}(j)$, $f(d^{k}_{\gamma})
\begin{align*}
\mathfrak{q}^{\diamond, k}_{\alpha}(f,X) \in f(\cup_{\beta \in \Pi^{\ell(j^{\ast},N)}} D^{\ell(j^{\ast},N)}_{\beta}) ,
\end{align*}
and
\begin{align*}
\vert \mathfrak{q}^{\diamond,k}_{\alpha}(f,X)  -  \mathfrak{q}^{\diamond, \ell(j^{\ast},N)}_{\alpha}(f,X)  \vert  \leqslant & 
\sup_{\beta \in \Pi^{\ell(j^{\ast},N)}}\vert f(d^{\ell(j^{\ast},N)}_{\beta}) - \mathfrak{q}^{\diamond,\ell(j^{\ast},N)}_{\alpha}(f,X)  \vert \\
& + \sup_{\beta \in \Pi^{\ell(j^{\ast},N)}} \sup_{x \in  D^{\ell(j^{\ast},N)}_{\beta}}  \vert f(d^{\ell(j^{\ast},N)}_{\beta}) - f(x) \vert   \\
\leqslant & 3 L_{f}(j^{\ast}) \delta^{\ell(j^{\ast},N)} .
\end{align*}
Hence, (\ref{eq:intervalle_confiance_unkown}) is obtained by applying the triangle inequality and Lemma \ref{lem:intervalle_confiance} with $k=\ell(j^{\ast},N)$ (remember that $ \mathfrak{q}^{\diamond, \ell(j^{\ast},N)}_{\alpha}(f,X)  = \mathfrak{q}^{\ell(j^{\ast},N)}_{L_{f}(j^{\ast}),\alpha}(f,X)  $).
\end{proof}

\section{Optimality}
\label{sec:opt}
%In this section, we aim to show the optimality of our algorithm. Specifically, we demonstrate that, in our framework where (\ref{hyp:lipschitz}),  (\ref{hyp:level_set}) and (\ref{densite_bornee}) are assumed, it is impossible to build lower and upper deterministic bounds for $q_{\alpha}(f,X)$ such that the error made, $i.e.$ essentially the difference between upper and lower bound, converges faster than exponential or polynomial rate (given the value of $d$) as $N$, the number of calls to $f$, tends to infinity.
In this section, we aim to demonstrate the optimality of our algorithms. Specifically, we demonstrate that, within our framework where (\ref{hyp:lipschitz}) and (\ref{hyp:level_set}) are assumed, it is impossible to construct lower and upper deterministic bounds for $q_{\alpha}(f,X)$ using $N$ evaluation of $f$, such that the error (essentially the difference between upper and lower bounds), converges faster than exponential or polynomial rate (given the value of $d$) $w.r.t.$ $N$.

%Our approach consists in considering a generic algorithm which evaluates $N \in \mathbb{N}^{\ast}$ times the function $f:\mathbb{R}^{d} \to \mathbb{R}$ at some points $(x^{1},\ldots,x^{N}) \in ([0,1]^{d})^{N}$ and which returns a measurable function of those evaluations to compute $q_{\alpha}(f,X)$, $\alpha \in (0,1)$ with $f$, $X$ and $\alpha$ satisfying (\ref{hyp:lipschitz}),  (\ref{hyp:level_set}) and (\ref{densite_bornee}). 
Our approach consists in considering a generic algorithm which evaluates $N \in \mathbb{N}^{\ast}$ times the function $f:\mathbb{R}^{d} \to \mathbb{R}$ at some points $(x^{1},\ldots,x^{N}) \in ([0,1]^{d})^{N}$ and which returns a measurable function of those evaluations to compute $q_{\alpha}(f,X)$, $\alpha \in (0,1)$ with $f$, $X$ and $\alpha$ satisfying (\ref{hyp:lipschitz}) and (\ref{hyp:level_set}). 
%%%%%%We are going to show that the error made in this case is at least of order $\rho^{N}$ ($\rho \in (0,1)$) when $d=1$ and $N^{-\frac{1}{d-1}}$ when $d>1$.  
We then propose a function $\bar{f}$, a random variable $X$ and $\alpha \in (0,1)$ satisfying (\ref{hyp:lipschitz}) and (\ref{hyp:level_set}), and for any choice of $(x_{1},\ldots,x_{N})$, we build $\tilde{f}$ such that $\tilde{f}$, $X$ and $\alpha$ satisfies (\ref{hyp:lipschitz}) and (\ref{hyp:level_set}), but also $\bar{f}(x_{i})=\tilde{f}(x_{i})$ for every $i \in \{1,\ldots,N\}$, and $\vert q_{\alpha}(f,X)-q_{\alpha}(\tilde{f},X) \vert \geqslant C \rho^{N}$ ($\rho \in (0,1)$ and $d=1$) or $C N^{-\frac{1}{d-1}}$ (when d>1), $C>0$. This property is combined with the following observation: For any measurable $g:\mathbb{R}^{N} \to \mathbb{R}$, 
%We then propose a function $\bar{f}$, a random variable $X$ and $\alpha \in (0,1)$ satisfying (\ref{hyp:lipschitz}),  (\ref{hyp:level_set}) and (\ref{densite_bornee}) and for any choice of $(x_{1},\ldots,x_{N})$, we build $\tilde{f}$ such that $\tilde{f}$, $X$ and $\alpha$ satisfies (\ref{hyp:lipschitz}),  (\ref{hyp:level_set}) and (\ref{densite_bornee}), but also $f(x_{i})=\tilde{f}(x_{i})$ for every $i \in \{1,\ldots,N\}$ and $\vert q_{\alpha}(f,X)-q_{\alpha}(\tilde{f},X) \vert \geqslant C \rho^{N}$ ($\rho \in (0,1)$ and $d=1$) or $C N^{-\frac{1}{d-1}}$ (when d>1), $C>0$. This property is combined with the following observation: for any measurable $g:\mathbb{R}^{N} \to \mathbb{R}$, 

\begin{align*}
\vert g(\bar{f}(x^{1}),\ldots,\bar{f}(x^{N})) - q_{\alpha}(f,X) \vert = &  \vert g(\tilde{f}(x^{1}),\ldots,\tilde{f}(x^{N})) - q_{\alpha}(\bar{f},X) \vert \\
\geqslant &    \vert q_{\alpha}(\tilde{f},X)- q_{\alpha}(\bar{f},X) \vert  -  \vert g(\tilde{f}(x^{1}),\ldots,\tilde{f}(x^{N})) - q_{\alpha}(\tilde{f},X) \vert  .
\end{align*}
It then follows that
\begin{align*}
\max(\vert g(\bar{f}(x^{1}),\ldots,\bar{f}(x^{N})) - q_{\alpha}(\bar{f},X) \vert, \vert g(\tilde{f}(x^{1}),\ldots,\tilde{f}(x^{N})) - q_{\alpha}(\tilde{f},X) \vert  ) 
\geqslant &   \frac{1}{2} \vert q_{\alpha}(\tilde{f},X)- q_{\alpha}(\bar{f},X) \vert .
\end{align*}
In other words, for any choice of measurable function $g$ (referred to as an algorithm), based on $N$ evaluations, we can construct two triplets $(\bar{f},X,\alpha)$ and $(\tilde{f},X,\alpha)$ satisfying both (\ref{hyp:lipschitz}) and (\ref{hyp:level_set}) and such that, the error bounds studied in Theorems \ref{th:conv_gen} and \ref{th:conv_gen_unknown} (with $q^{N}_{\alpha}(f,X)$ or $q^{\diamond,N}_{\alpha}(f,X)$ replaced by $ g(f(x^{1}),\ldots,f(x^{N})$) for $f \in \{\bar{f},\tilde{f}\}$), for at least one triplet, are lower bounded by $C \rho^{N}$ ($\rho \in (0,1)$) or $C N^{-\frac{1}{d-1}}$, depending on the value of $d$. Thus, the exponential or polynomial convergence rates established in these theorems cannot be improved within our framework.

\subsection{Case $d>1$.}
In this section, we are interested in the optimality of the polynomial convergence obtained in Theorems \ref{th:conv_gen} and \ref{th:conv_gen_unknown} for the case $d>1$.
\begin{prop}
\label{prop:optimality_d>1}
%\label{prop:optimality_d=1}
Let $d >1$. There exists $C>0$ such that for every $N \in \mathbb{N}^{\ast}$, every $g:\mathbb{R}^{N} \to \mathbb{R}$ measurable and every $(x^{1},\ldots,x^{N}) \in ([0,1]^{d})^{N}$, there exists $f,X,\alpha$ such that (\ref{hyp:lipschitz}) and (\ref{hyp:level_set}) hold and that
%Let $d >1$. There exists $C>0$ such that for every $N \in \mathbb{N}^{\ast}$ and $g:\mathbb{R}^{N} \to \mathbb{R}$ measurable and every $(x^{1},\ldots,x^{N}) \in ([0,1]^{d})^{N}$, there exists $f,X,\alpha$ such that (\ref{hyp:lipschitz}), (\ref{hyp:level_set}) and (\ref{densite_bornee}) hold and that
\begin{align*}
\vert g(f(x^{1}),\ldots,f(x^{N})) - q_{\alpha}(f,X) \vert \geqslant C N^{\frac{1}{d-1}}.
\end{align*}

%Let $N \in \mathbb{N}^{\ast}$ and let $g:\mathbb{R}^{N} \to \mathbb{R}$ be a measurable function.  Then, there exists $C>0$ such that for any $(x^{1},\ldots,x^{N}) \in ([0,1]^{d})^{N}$, there exists $f,X,\alpha,M,L_{f}$ such that (\ref{hyp:lipschitz}), (\ref{hyp:level_set}) and (\ref{densite_bornee}) hold and that
%\begin{align*}
%\vert g(f(x^{1}),\ldots,f(x^{N})) - q_{\alpha}(f,X) \vert \geqslant C N^{\frac{1}{d-1}}.
%\end{align*}
\end{prop}

\begin{remark}
This result shows that the rate of convergence obtained in Theorems \ref{th:conv_gen} and \ref{th:conv_gen_unknown} in the case $d>1$ is optimal. In other words, any algorithm other than ours can only improve (lower in this case) the constant $C$ obtained in our deterministic upper bound of the error, but not the polynomial rate $N^{\frac{1}{d-1}}$. We notice that it remains compatible with the rate of convergence $N^{\frac{1}{2}}$ of Monte Carlo, because the latter is a weak rate of convergence.
\end{remark}
\begin{proof}
%As explained in the begining of this Section, to prove this result, it is sufficient to build $\bar{f},\tilde{f}:[0,1]^{d} \to \mathbb{R}$, $X$ and $\alpha$ such that $(\bar{f},X,\alpha)$ and $(\tilde{f},X,\alpha)$ both satisfies (\ref{hyp:lipschitz}),  (\ref{hyp:level_set}) and (\ref{densite_bornee}) and that $f(x_{i})=\tilde{f}(x_{i})$ for every $i \in \{1,\ldots,N\}$ and $\vert q_{\alpha}(f,X)-q_{\alpha}(\tilde{f},X) \vert \geqslant C N^{-\frac{1}{d-1}}$, $C>0$. 
As explained at the beginning of this section, to prove this result, it is sufficient to build $\bar{f},\tilde{f}:[0,1]^{d} \to \mathbb{R}$, $X$ and $\alpha$ such that $(\bar{f},X,\alpha)$ and $(\tilde{f},X,\alpha)$ both satisfy (\ref{hyp:lipschitz}) and (\ref{hyp:level_set}) and that $f(x_{i})=\tilde{f}(x_{i})$ for every $i \in \{1,\ldots,N\}$ and $\vert q_{\alpha}(f,X)-q_{\alpha}(\tilde{f},X) \vert \geqslant C N^{-\frac{1}{d-1}}$, $C>0$.

Let $X \sim \mathcal{U}_{[0,1]^{d}}$ be a uniform random variable on $[0,1]^{d}$, let $\alpha=\frac{1}{2}$ and let us introduce $\bar{f}:[0,1]^{d} \to \mathbb{R}$ such that $\bar{f}(x)=x_{1}$ for every $x=(x_{1},\ldots ,x_{d})\in [0,1]^{d}$. In this case $q_{\alpha}(\bar{f},X)=\alpha=\frac{1}{2}$. 

To begin, it is straightforward to verify that $\bar{f}$ satisfies assumption (\ref{hyp:lipschitz}) with $L_{\bar{f}}=1$. Let us verify assumption (\ref{hyp:level_set}): Let $\delta >0$. Since $q_{\alpha}(\bar{f},X)=\frac{1}{2}$, we have
%Let us consider $X \sim \mathcal{U}_{[0,1]^{d}}$ a uniform random variable on $[0,1]^{d}$ and introduce $\bar{f}:[0,1]^{d} \to \mathbb{R}$ such that $\bar{f}(x_{1},\ldots ,x_{d})=x_{1}$. We also fix $\alpha=\frac{1}{2}$. In this case $q_{\alpha}(\bar{f},X)=\frac{1}{2}$. It is straightforward to verify that, for those $\bar{f}$ and $X$, assumptions (\ref{hyp:lipschitz}) with $L_{\bar{f}}=1$ and (\ref{densite_bornee}) with $K=1$ are satisfied. Let us verify assumption (\ref{hyp:level_set}): Let $\delta >0$. Since $q_{\alpha}(f,X)=\alpha=\frac{1}{2}$, we have
\begin{align*}
\lambda_{\mbox{Leb}}(x \in [0,1]^{d},\bar{f}(x) \in [q_{\alpha}(\bar{f},X)-\delta,q_{\alpha}(\bar{f},X)+\delta])  = & \lambda_{\mbox{Leb}}(x \in [0,1]^{d},x_{1} \in [\frac{1}{2}-\delta,\frac{1}{2}+\delta])   =  2 \delta,
\end{align*}
and $(\ref{hyp:level_set})$ holds for $\bar{f},X$ and $\alpha=\frac{1}{2}$ with $M=2$. 

We now propose a construction for $\tilde{f}$. We introduce
\begin{align*}
\mathcal{C}:=\{x \in [0,1]^{d},\bar{f}(x)=\frac{1}{2} \}=\{x \in [0,1]^{d},x_{1}=\frac{1}{2} \}.
\end{align*}
We focus on the case $N= \frac{3^{j(d-1)}}{3}$, the proof being similar otherwise. Let us define
\begin{align*}
\hat{D}^{j}:=\{D^{j}_{\beta}, D^{j}_{\beta} \cap \mathcal{C} \neq \emptyset ,\beta \in \{0,\ldots,3^{j}-1\}^d \} ,
\end{align*}
and
\begin{align*}
\tilde{D}^{j}:=\{D^{j}_{\beta}, D^{j}_{\beta} \cap \mathcal{C} \neq \emptyset,  \bigcap_{i=1}^{N} x^{i} \notin D^{j}_{\beta},\beta \in \{0,\ldots,3^{j}-1\}^d \}.
\end{align*}
In particular, $j$ is defined in a way such that $\mbox{Card}(\hat{D}^{j})=3^{j(d-1)}=3N$ and then $\mbox{Card}(\tilde{D}^{j}) \geqslant 2N$. We are now in a position to introduce $\tilde{f}$. Let $\mathbf{L} >1$. For every $x \in [0,1]^{d}$, let us define
\begin{align*}
\tilde{f}(x)=\bar{f}(x)+ \mathbf{L}  \sum_{ D^{j}_{\beta} \in \tilde{D}^{j}} \inf_{y \notin D^{j}_{\beta} } \vert x -y  \vert_{\infty} , %\mathds{1}_{x \in D^{j}_{\beta}}
\end{align*}
with $ \vert x -y  \vert_{\infty}:=\sup_{i \in \{1,\ldots,d\}} \vert x_{i}- y_{i} \vert$. 

We remark that the piecewise affine function $h:x \mapsto  \sum_{ D^{j}_{\beta} \in \tilde{D}^{j}}  \inf_{y \notin D^{j}_{\beta} } \vert x -y  \vert_{\infty}$ and $\bar{f}$ are both 1-Lipschitz so that (\ref{hyp:lipschitz}) holds for $\tilde{f}$ with $L_{\tilde{f}}=\mathbf{L} +1$.  Let us now show that (\ref{hyp:level_set}) holds for $\tilde{f},X$ and $\alpha=\frac{1}{2}$ with $M<+\infty$.  First, decomposing our computation on $\tilde{D}^{j}$ and its complementary space, we obtain
%The approach is similar to the one in \cite{Bernard_Cohen_Guyader_Malrieu_2022} (see Section 6). 
\begin{align*}
\lambda_{\mbox{Leb}}(\tilde{f}(x)\in &  [q_{\frac{1}{2}}(\tilde{f},X)- \delta,q_{\frac{1}{2}}(\tilde{f},X)+\delta]) =  \sum_{ D^{j}_{\beta} \notin \tilde{D}^{j}}  \lambda_{\mbox{Leb}}(x \in D^{j}_{\beta},\tilde{f}(x)\in [q_{\frac{1}{2}}(\tilde{f},X)-\delta,q_{\frac{1}{2}}(\tilde{f},X)+\delta])  \\
& + \sum_{ D^{j}_{\beta} \in \tilde{D}^{j}}  \lambda_{\mbox{Leb}}(x \in D^{j}_{\beta},\tilde{f}(x)\in [q_{\frac{1}{2}}(\tilde{f},X)-\delta,q_{\frac{1}{2}}(\tilde{f},X)+\delta])   \\
%= & \sum_{ D^{j}_{\beta} \notin \tilde{D}^{j}}  \lambda_{\mbox{Leb}}(x_{1}\in [q_{\frac{1}{2}}(\tilde{f},X)-\delta,q_{\frac{1}{2}}(\tilde{f},X)+\delta],x \in D^{j}_{\beta})  \\
%& + \sum_{ D^{j}_{\beta} \in \tilde{D}^{j}}  \lambda_{\mbox{Leb}}(\tilde{f}(x)\in [q_{\frac{1}{2}}(\tilde{f},X)-\delta,q_{\frac{1}{2}}(\tilde{f},X)+\delta],x \in D^{j}_{\beta})  \\
= & \lambda_{\mbox{Leb}}(x \in \cup_{D^{j}_{\beta} \notin \tilde{D}^{j}}D^{j}_{\beta}, x_{1}\in [q_{\frac{1}{2}}(\tilde{f},X)-\delta,q_{\frac{1}{2}}(\tilde{f},X)+\delta])  \\
& + \sum_{ D^{j}_{\beta} \in \tilde{D}^{j}}  \lambda_{\mbox{Leb}}(,\tilde{f}(x)\in [q_{\frac{1}{2}}(\tilde{f},X)-\delta,q_{\frac{1}{2}}(\tilde{f},X)+\delta])  \\
\leqslant & 2 \delta + \sum_{ D^{j}_{\beta} \in \tilde{D}^{j}}  \lambda_{\mbox{Leb}}(x \in D^{j}_{\beta}, x_{1}+\mathbf{L} \inf_{y \notin D^{j}_{\beta} } \vert x -y  \vert_{\infty}  \in [q_{\frac{1}{2}}(\tilde{f},X)-\delta,q_{\frac{1}{2}}(\tilde{f},X)+\delta]) .
\end{align*}

Now, for $D^{j}_{\beta} \in  \tilde{D}^{j}$, we remark that $x \mapsto x_{1}+\mathbf{L} \inf_{y \notin D^{j}_{\beta} } \vert x -y  \vert_{\infty}$ defined on  $\tilde{D}^{j}_{\beta}$ is a rescaled version of the function $f_{\ast}:x \mapsto x_{1}+\mathbf{L} \inf_{y \notin [0,1]^{d} } \vert x -y  \vert_{\infty}$, $x \in[0,1]^{d}$. More particularly, for every $x \in D^{j}_{\beta}$, $\tilde{f}(x)=\frac{1}{2}-\frac{3^{-j}}{2}+3^{-j}f_{\ast}( 3^{j}  (x-d^{j}_{\beta}+\frac{3^{-j}}{2}(1,\ldots,1) ))$. We also observe that $f_{\ast}$ is $a.e$ differentiable, has a piecewise constant gradient and satisfies $ \vert \nabla f_{\ast}(x) \vert_{\mathbb{R}^{d}} \geqslant \frac{\mathbf{L}-1}{\sqrt{d}}$ for $a.e.$ $x \in [0,1]^{d}$.  In addition, it follows from the coarea formula (see Theorem 3.11 in \cite{Evans_Gariepy_2015}) that for every $a\geqslant 0$ and $\delta>0$,
\begin{align*}
\lambda_{\mbox{Leb}}(x \in [0,1]^{d}, f_{\ast}(x) \in [a-\delta,a+\delta])= \int_{a-\delta}^{a+\delta}  \int_{f_{\ast}^{-1}(\{y \})\cap [0,1]^{d}} \frac{1}{\vert \nabla f_{\ast} (z)\vert } \mathcal{H}^{d-1}( \mbox{d}z)  \mbox{d}y,
\end{align*}
where $\mathcal{H}^{d-1}$ stands for the $(d-1)$-dimensional Hausdorff measure on $\mathbb{R}^{d}$. Moreover, $f_{\ast}$ is equal to $q_{\frac{1}{2}}(\tilde{f},X)$ on a polyhedral shaped set ($i.e.$, $f_{\ast}^{-1}(\{q_{\frac{1}{2}}(\tilde{f},X) \})$ with finite $(d-1)$-dimensional Hausdorff measure that we denote $H$. Hence, taking $a=q_{\frac{1}{2}}(\tilde{f},X)$ and $\delta $ small enough in the coarea formula yields
%\textcolor{red}{$y \mapsto   \int_{f_{\ast}^{-1}(\{y \})\cap [0,1]^{d}}  \mathcal{H}^{d-1}( \mbox{d}z)$  is continuous mais dur de trouver une ref}
\begin{align*}
\lambda_{\mbox{Leb}}(x \in [0,1]^{d}, f_{\ast}(x) \in [q_{\frac{1}{2}}(\tilde{f},X)-\delta,q_{\frac{1}{2}}(\tilde{f},X)+\delta]) \leqslant 2 \delta (H+1) \frac{\sqrt{d}}{\mathbf{L}-1}.
\end{align*}
Therefore, there exists $M_{\ast}$ such that for every $\delta >0$, we have
\begin{align*}
\lambda_{\mbox{Leb}}(x \in [0,1]^{d}, f_{\ast}(x) \in [q_{\frac{1}{2}}(\tilde{f},X)-\delta,q_{\frac{1}{2}}(\tilde{f},X)+\delta]) \leqslant M_{\ast} \delta.
\end{align*}
Recalling that $\tilde{f}$ is a rescaled version of $f_{\ast}$ and using the change of variables formula, it follows that for $D^{j}_{\beta} \in  \tilde{D}^{j}$,  
\begin{align*}
\lambda_{\mbox{Leb}}(x \in D^{j}_{\beta} , \tilde{f}(x)& \in [q_{\frac{1}{2}}(\tilde{f},X)-\delta,q_{\frac{1}{2}}(\tilde{f},X)+\delta]) \\
=&  \lambda_{\mbox{Leb}}(x \in D^{j}_{\beta} ,3^{-j}f_{\ast}( 3^{j}  (x-d^{j}_{\beta}+\frac{3^{-j}}{2}(1,\ldots,1) )) \in [3^{-j}q_{\frac{1}{2}}(\tilde{f},X)-\delta,3^{-j}q_{\frac{1}{2}}(\tilde{f},X)+\delta])   \\
=&  3^{-jd}\lambda_{\mbox{Leb}}(x \in [0,1]^{d},3^{-j}f_{\ast}(x) \in [3^{-j}q_{\frac{1}{2}}(\tilde{f},X)-\delta,3^{-j}q_{\frac{1}{2}}(\tilde{f},X)+\delta])    \\
=& M_{\ast} \delta 3^{-j(d-1)}.
\end{align*}
Summing over all $D^{j}_{\beta} \in  \tilde{D}^{j}$, we obtain
\begin{align*}
 \sum_{ D^{j}_{\beta} \in \tilde{D}^{j}}  \lambda_{\mbox{Leb}}(x_{1}+\mathbf{L} \inf_{y \notin D^{j}_{\beta} } \vert x -y  \vert_{\infty}  \in [q_{\frac{1}{2}}(\tilde{f},X)-\delta,q_{\frac{1}{2}}(\tilde{f},X)+\delta],x \in D^{j}_{\beta}) \leqslant M_{\ast} \delta ,
\end{align*}
and (\ref{hyp:level_set}) holds for $\tilde{f},X$ and $\alpha=\frac{1}{2}$ with $M=M_{\ast}+2 <+\infty$. 
%We also notice that (\ref{densite_bornee}) holds with $K=1$.

Moreover, for every $i \in \{1,\ldots,N\}$, we have $x^{i} \notin D^{j}_{\beta}$ when $D^{j}_{\beta} \in \tilde{D}^{j}$, so that $\bar{f}(x^{i})=\tilde{f}(x^{i})$.  In order to conclude the proof, we are left to show that $\vert q_{\alpha}(f,X)-q_{\alpha}(\tilde{f},X) \vert \geqslant C N^{-\frac{1}{d-1}}$ for $C>0$. We thus aim to prove that there exists $C>0$ and $\mathbf{L}$ which does not depend on $N$ and such that
%\begin{align*}
%\mathbb{P}(\tilde{f}(X) \leqslant \frac{1}{2} - C N^{-\frac{1}{d-1}}) \geqslant \frac{1}{2}
%\end{align*}
%This implies that $q_{\frac{1}{2}}(\tilde{f},X) \leqslant  \frac{1}{2} - C N^{-\frac{1}{d-1}}$ 
\begin{align*}
\mathbb{P}(\tilde{f}(X) \leqslant \frac{1}{2} + C N^{-\frac{1}{d-1}}) <\frac{1}{2}.
\end{align*}
This implies that $q_{\frac{1}{2}}(\tilde{f},X) > \frac{1}{2} + C N^{-\frac{1}{d-1}}=q_{\frac{1}{2}}(\bar{f},X) + C N^{-\frac{1}{d-1}} $, which is the expected conclusion.

From now, let us consider the fixed value $C=\frac{1}{12 (3^{d-1}-1) 3^{\frac{1}{d-1}}}$. We begin by noticing that 
\begin{align*}
\mathbb{P}(\tilde{f}(X) \leqslant \frac{1}{2} + C N^{-\frac{1}{d-1}}) = & \sum_{\beta \in \{0,\ldots , 3^{j}-1\}^{d} } \mathbb{P}(\tilde{f}(X) \leqslant \frac{1}{2} + C N^{-\frac{1}{d-1}} \vert X \in D^{j}_{\beta})  \mathbb{P}(X \in D^{j}_{\beta})  \\
\leqslant & \sum_{D^{j}_{\beta} \notin \tilde{D}^{j}} \mathbb{P}(X_{1} \leqslant \frac{1}{2} + C N^{-\frac{1}{d-1}} \vert X \in D^{j}_{\beta})  \mathbb{P}(X \in D^{j}_{\beta}) \\
& +  \sum_{D^{j}_{\beta} \in \tilde{D}^{j}} \mathbb{P}(X_{1}  + \mathbf{L} \inf_{y \notin D^{j}_{\beta} } \vert X -y  \vert_{\infty}  \leqslant \frac{1}{2} + C N^{-\frac{1}{d-1}}  \vert X \in D^{j}_{\beta})  \mathbb{P}(X \in D^{j}_{\beta})  .
\end{align*}

We now study these two terms in the $r.h.s.$ above. On the one hand, we observe that, when $D^{j}_{\beta} \notin \hat{D}^{j}$, we have $\vert d^{j}_{\beta_{1}}-\frac{1}{2} \vert \geqslant 3^{-j}$, and since $C N^{-\frac{1}{d-1}} \leqslant  \frac{3^{-j}}{2}$, we obtain

\begin{align*}
\mathbb{P}(X_{1} \leqslant \frac{1}{2} + C N^{-\frac{1}{d-1}} \vert X \in D^{j}_{\beta})  =  \mathds{1}_{d^{j}_{\beta_{1}} < \frac{1}{2} } , % \mathds{1}_{X \in D^{j}_{\beta} }
\end{align*}

and 
\begin{align*}
 \sum_{D^{j}_{\beta} \notin \hat{D}^{j}} \mathbb{P}(X_{1} \leqslant \frac{1}{2} + C N^{-\frac{1}{d-1}} \vert X \in D^{j}_{\beta})  \mathbb{P}(X \in D^{j}_{\beta}) =  \mathbb{P}(X_{1} \leqslant \frac{1}{2}  -\frac{3^{-j}}{2})=\frac{1-3^{-j}}{2} .
\end{align*}
It follows that

%\begin{align*}
% \sum_{D^{j}_{\beta} \notin \tilde{D}^{j}} \mathbb{P}(X_{1} \leqslant  \frac{1}{2} + C N^{-\frac{1}{d-1}} \vert X \in D^{j}_{\beta})  \mathbb{P}(X \in D^{j}_{\beta}) \leqslant & \frac{1-3^{-j}}{2}+ N 3^{-j d}3^{j}(C N^{-\frac{1}{d-1}} + \frac{3^{-j}}{2})  \\
%=&  \frac{1-3^{-j}}{2} + \frac{3^{-j}}{3}=  \frac{1-3^{-j-1}}{2} .
%\end{align*}

\begin{align*}
 \sum_{D^{j}_{\beta} \notin \tilde{D}^{j}} \mathbb{P}(X_{1} \leqslant  \frac{1}{2} + C N^{-\frac{1}{d-1}} \vert X \in D^{j}_{\beta})  \mathbb{P}(X \in D^{j}_{\beta}) \leqslant & \frac{1-3^{-j}}{2}+ N3^{-j d}3^{j}(C N^{-\frac{1}{d-1}} + \frac{3^{-j}}{2})  \\
=&  \frac{1-3^{-j}}{2} +\frac{3^{-j}}{3}=  \frac{1}{2} -\frac{3^{-j}}{6}  .
\end{align*}

%\begin{align*}
% \sum_{D^{j}_{\beta} \notin \tilde{D}^{j}} \mathbb{P}(X_{1} \leqslant  \frac{1}{2} + C N^{-\frac{1}{d-1}} \vert X \in D^{j}_{\beta})  \mathbb{P}(X \in D^{j}_{\beta}) \leqslant & \frac{1-3^{-j}}{2}+ card(\hat{D}^{j} \setminus \tilde{D}^{j}) 3^{-j d}3^{j}(C N^{-\frac{1}{d-1}} + \frac{3^{-j}}{2})  \\
%=&  \frac{1-3^{-j}}{2} + card(\hat{D}^{j} \setminus \tilde{D}^{j})3^{-jd}.
%\end{align*}

On the other hand, we are going to show that
\begin{align*}
 \sum_{D^{j}_{\beta} \in \tilde{D}^{j}} \mathbb{P}(X_{1}  + \mathbf{L} \inf_{y \notin D^{j}_{\beta} } \vert X -y  \vert_{\infty}  \leqslant \frac{1}{2} + C N^{-\frac{1}{d-1}}  \vert X \in D^{j}_{\beta})  \mathbb{P}(X \in D^{j}_{\beta}) \leqslant \frac{3^{-j}}{6},
\end{align*}
and the proof will be completed. For $u \in (0, \frac{3^{-j}}{2})$,  $\beta \in \{0,\ldots,3^{j}-1\}^{d}$, let us denote $D^{j}_{\beta,u}=[d^{j}_{\beta_{1}}-u,d^{j}_{\beta_{1}}+u] \times \ldots \times [d^{j}_{\beta_{d}}-u,d^{j}_{\beta_{d}}+u]  $. We remark that, when $D^{j}_{\beta} \in \tilde{D}^{j}$, then for every $x \in D^{j}_{\beta,u}$, we have $\tilde{f}(x) \leqslant x_{1} + \mathbf{L}(\frac{3^{-j}}{2}- u )$ and $d^{j}_{\beta_{1}}=\frac{1}{2}$. Using the independence of the components of $X$, it follows that for $D^{j}_{\beta} \in \tilde{D}^{j}$,
\begin{align*}
 \mathbb{P}  (X_{1}  +& \mathbf{L} \inf_{y \notin D^{j}_{\beta} } \vert X -y  \vert_{\infty}  \leqslant \frac{1}{2} + C N^{-\frac{1}{d-1}}  \vert X \in D^{j}_{\beta})    \\
 \leqslant &  \mathbb{P}(X_{1}   \leqslant \frac{1}{2} + C N^{-\frac{1}{d-1}},X  \notin D^{j}_{\beta,u} \vert X \in D^{j}_{\beta})  \\
 &+ \mathbb{P}(X_{1}   \leqslant \frac{1}{2} + C N^{-\frac{1}{d-1}}-\mathbf{L}(\frac{3^{-j}}{2}- u ),X  \in D^{j}_{\beta,u} \vert X \in D^{j}_{\beta}) \\
=&  \mathbb{P}(X_{1}   \leqslant \frac{1}{2} + C N^{-\frac{1}{d-1}},X_{1} \in [\frac{1}{2}-u,\frac{1}{2}+u] \vert X \in D^{j}_{\beta}) \mathbb{P}(\bigcup_{i=2}^{d} X_{i} \notin [d^{j}_{\beta_{i}}-u,d^{j}_{\beta_{i}}+u ] \vert X \in D^{j}_{\beta})   \\
& + \mathbb{P}(X_{1}   \leqslant \frac{1}{2} + C N^{-\frac{1}{d-1}},X_{1} \notin [\frac{1}{2}-u,\frac{1}{2}+u]  \vert X \in D^{j}_{\beta})  \\
 &+ \mathbb{P}(X_{1}   \leqslant \frac{1}{2} + C N^{-\frac{1}{d-1}}-\mathbf{L}(\frac{3^{-j}}{2}- u ),X  \in D^{j}_{\beta,u} \vert X \in D^{j}_{\beta})  \\ %  \mathds{1}_{D^{j}_{\beta} \in \tilde{D}^{j}}\\
 % &+ \mathbb{P}(X_{1}    \leqslant \frac{1}{2} + C N^{-\frac{1}{d-1}},X  \in D^{j}_{\beta,h} \vert X \in D^{j}_{\beta})  \mathds{1}_{D^{j}_{\beta} \in \hat{D}^{j} \setminus \tilde{D}^{j}}\\
 \leqslant &  \frac{1}{2}-\frac{1}{2} (2u3^{j})^{d}+3^{j} \min(u, C N^{-\frac{1}{d-1}}) (1-(2u 3^{j})^{d-1}) \\
 &+3^{j}(C N^{-\frac{1}{d-1}}-\mathbf{L}(\frac{3^{-j}}{2}- u )+u)_{+}+3^{j}(C N^{-\frac{1}{d-1}}-u)_{+}.
\end{align*}

Since $C N^{-\frac{1}{d-1}} \leqslant  \frac{3^{-j}}{12(3^{d-1}-1)}$, we choose $u=\frac{\mathbf{L}}{2(\mathbf{L}+1)}3^{-j}$ and $\mathbf{L}>\frac{5^{\frac{1}{d}}}{6^{\frac{1}{d}}-5^{\frac{1}{d}}}$, so we obtain, when $D^{j}_{\beta} \in \tilde{D}^{j}$,
\begin{align*}
 \mathbb{P} & (X_{1}  +\mathbf{L} \inf_{y \notin  D^{j}_{\beta} } \vert X -y  \vert_{\infty}  \leqslant \frac{1}{2} + C N^{-\frac{1}{d-1}}  \vert X \in D^{j}_{\beta}) < \frac{1}{6}.
\end{align*}
Gathering all the terms together yields

\begin{align*}
\sum_{D^{j}_{\beta} \in \tilde{D}^{j}} \mathbb{P}(X_{1}  + \mathbf{L} \inf_{y \notin  D^{j}_{\beta} } \vert X -y  \vert_{\infty}  \leqslant \frac{1}{2} + C N^{-\frac{1}{d-1}}  \vert X \in D^{j}_{\beta})  \mathbb{P}(X \in D^{j}_{\beta})   <  3N 3^{-jd} \frac{1}{6}= \frac{3^{-j}}{6},
\end{align*}

%\begin{align*}
%\sum_{D^{j}_{\beta} \in \tilde{D}^{j}} \mathbb{P}(X_{1}  + 2 \inf_{y \in \Omega \setminus D^{j}_{\beta} } \vert X-y  \vert_{\infty}  \leqslant \frac{1}{2} + C N^{-\frac{1}{d-1}}  \vert X \in D^{j}_{\beta})  \mathbb{P}(X \in D^{j}_{\beta})   <  card (\tilde{D}^{j}) 3^{-jd} \frac{1}{2}
%\end{align*}
and
\begin{align*}
\mathbb{P}(\tilde{f}(X) \leqslant \frac{1}{2} + C N^{-\frac{1}{d-1}}) < \frac{1}{2}.
\end{align*}

%\begin{align*}
%\mathbb{P}(\tilde{f}(X) \leqslant \frac{1}{2} + C N^{-\frac{1}{d-1}}) < & \frac{1-3^{-j}}{2} + card(\hat{D}^{j} \setminus \tilde{D}^{j})3^{-jd}+card (\tilde{D}^{j}) 3^{-jd} \frac{1}{2} \\
%=& \frac{1-3^{-j}}{2} + card(\hat{D}^{j} )3^{-jd}-card (\tilde{D}^{j}) 3^{-jd} \frac{1}{2} \\
%=& \frac{1-3^{-j}}{2} + 3^{-j}-card (\tilde{D}^{j}) 3^{-jd} \frac{1}{2} \\
%\end{align*}
This implies that $q_{\frac{1}{2}}(\tilde{f},X) > \frac{1}{2} + C N^{-\frac{1}{d-1}}$ (with $C=\frac{1}{12 (3^{d-1}-1) 3^{\frac{1}{d-1}}}$) and the proof is completed.
\end{proof}

\subsection {Case $d=1$}
In this Section, we are interested in the optimality of the exponential convergence obtained in Theorems \ref{th:conv_gen} and \ref{th:conv_gen_unknown} in the case $d=1$.

\begin{prop}
\label{prop:optimality_d=1}
Let $d=1$. There exists $C>0$ and $\rho \in (0,1)$ such that for every $N \in \mathbb{N}^{\ast}$, every $g:\mathbb{R}^{N} \to \mathbb{R}$ measurable and every $(x^{1},\ldots,x^{N}) \in [0,1]^{N}$, there exists $f,X,\alpha$ such that (\ref{hyp:lipschitz}) and (\ref{hyp:level_set}) hold and that
%Let $d=1$. There exists $C>0$ and $\rho \in (0,1)$ such that for every $N \in \mathbb{N}^{\ast}$ and $g:\mathbb{R}^{N} \to \mathbb{R}$ measurable and every $(x^{1},\ldots,x^{N}) \in [0,1]^{N}$, there exists $f,X,\alpha$ such that (\ref{hyp:lipschitz}), (\ref{hyp:level_set}) and (\ref{densite_bornee}) hold and that
\begin{align*}
\vert g(f(x^{1}),\ldots,f(x^{N})) - q_{\alpha}(f,X) \vert \geqslant C \rho^{N}.
\end{align*}
%Let $N \in \mathbb{N}^{\ast}$ and let $g:\mathbb{R}^{N} \to \mathbb{R}$ be a measurable function.  Then, there exists $C>0$ and $\rho \in (0,1)$ such that for any $(x^{1},\ldots,x^{N}) \in ([0,1]^{d})^{N}$, there exists $f,X,\alpha$ such that (\ref{hyp:lipschitz}), (\ref{hyp:level_set}) and (\ref{densite_bornee}) hold and that
%\begin{align*}
%\vert g(f(x^{1}),\ldots,f(x^{N})) - q_{\alpha}(f,X) \vert \geqslant C \rho^{N}.
%\end{align*}

%for every $\rho \in (0, \vert \frac{3}{4} \vert^{\frac{1}{2}ML_{f}})$ $\rho \in (0, \vert \frac{3}{4} \vert^{\frac{8}{4ML_{f}}})$, $\rho \in (0, \vert \frac{3}{4} \vert^{\frac{8 (\mathbf{L}+1)^{2}/\mathbf{L}}{4ML_{f}}})$.
\end{prop}

\begin{remark}
This result shows that the exponential convergence obtained for the upper bound of the error studied in Theorems \ref{th:conv_gen} and \ref{th:conv_gen_unknown} in the case $d=1$ is optimal in the sense that Proposition \ref{prop:optimality_d=1} provides a minorant for this error bound which also converges with exponential rate.
%It means that, using any other algorithm than ours, the constant $C$ obtained in our upper bound could be improved but also the common ratio $\rho$ but those new quantities can not depend on $N$ ($e.g.$ tend to $0$ as $N$ tends to infinity).
\end{remark}

\begin{proof}
To prove this result, we adopt a similar strategy to the proof of Proposition \ref{prop:optimality_d>1}  in the case $d>1$. We aim to build $\bar{f},\tilde{f}:[0,1]^{d} \to \mathbb{R}$, $X$ and $\alpha$ such that $(\bar{f},X,\alpha)$ and $(\tilde{f},X,\alpha)$ both satisfy (\ref{hyp:lipschitz}) and (\ref{hyp:level_set}) and that $f(x_{i})=\tilde{f}(x_{i})$ for every $i \in \{1,\ldots,N\}$ and $\vert q_{\alpha}(f,X)-q_{\alpha}(\tilde{f},X) \vert \geqslant C \rho^{N}$, $C>0$, $\rho \in (0,1)$. 
%To prove this result, we adopt a similar strategy as in the proof of Proposition\ref{prop:optimality_d>1}  in the case $d>1$. We aim to build $\bar{f},\tilde{f}:[0,1]^{d} \to \mathbb{R}$, $X$ and $\alpha$ such that $(\bar{f},X,\alpha)$ and $(\tilde{f},X,\alpha)$ both satisfy (\ref{hyp:lipschitz}),  (\ref{hyp:level_set}) and (\ref{densite_bornee}) and that $f(x_{i})=\tilde{f}(x_{i})$ for every $i \in \{1,\ldots,N\}$ and $\vert q_{\alpha}(f,X)-q_{\alpha}(\tilde{f},X) \vert \geqslant C \rho^{N}$, $C>0$, $\rho \in (0,1)$. 

We consider $X \sim \mathcal{U}_{[0,1]}$, a uniform random variable on $[0,1]$ and introduce $\bar{f}:[0,1] \to \mathbb{R}$ such that $\bar{f}(x)=x$. We choose $\alpha=\frac{1}{2}$ so that $q_{\alpha}(\bar{f},X)=\frac{1}{2}$. We can easily verify that assumptions (\ref{hyp:lipschitz}) with $L_{\bar{f}}=1$ and $(\ref{hyp:level_set})$ with $M=2$ are satisfied by $\bar{f},X$ and $\alpha=\frac{1}{2}$. 
%We consider $X \sim \mathcal{U}_{[0,1]}$, a uniform random variable on $[0,1]$ and introduce $\bar{f}:[0,1] \to \mathbb{R}$ such that $\bar{f}(x)=x$. We choose $\alpha=\frac{1}{2}$ so that $q_{\alpha}(\bar{f},X)=\frac{1}{2}$. We can easily verify that assumptions (\ref{hyp:lipschitz}) with $L_{\bar{f}}=1$, $(\ref{hyp:level_set})$ with $M=2$ and (\ref{densite_bornee}) with $K=1$ are satisfied by $\bar{f},X$ and $\alpha=\frac{1}{2}$. 

%We introduce
%\begin{align*}
%\mathcal{C}=\{x \in [0,1],f(x)=\frac{1}{2} \}=\{ \frac{1}{2}  \}
%\end{align*}

We now propose a construction for $\tilde{f}$. For $\boldsymbol{\rho} \in (0,1)$, let us define $I_{N+1}:=[\frac{1}{2}(1-\boldsymbol{\rho}^{N}),\frac{1}{2}(1+\boldsymbol{\rho}^{N})]$, and for $j \in \{1,\ldots,N\}$, $I_{j,-} := [\frac{1}{2}(1-\boldsymbol{\rho}^{j-1}),\frac{1}{2}(1-\boldsymbol{\rho}^{j})] $, $  I_{j,+}:= [\frac{1}{2}(1+\boldsymbol{\rho}^{j}),\frac{1}{2}(1+\boldsymbol{\rho}^{j-1})]$ and $I_{j}:=I_{j,-} \cup I_{j,+}$. We also define

%We now propose a construction for $\tilde{f}$. For $\boldsymbol{\rho} \in (0,1)$, let us define $I_{N+1}=[\frac{1}{2}(1-\boldsymbol{\rho}^{N}),\frac{1}{2}(1+\boldsymbol{\rho}^{N})]$, $I_{j}=I_{j,-} \cup I_{j,+}=[\frac{1}{2}(1-\boldsymbol{\rho}^{j-1}),\frac{1}{2}(1-\boldsymbol{\rho}^{j})] \cup [\frac{1}{2}(1+\boldsymbol{\rho}^{j}),\frac{1}{2}(1+\boldsymbol{\rho}^{j-1})]$, $j \in \{1,\ldots,N\}$, let us define
\begin{align*}
\hat{D}^{N}:=\{I_{j} , j \in \{1,\ldots,N+1\} \} ,
\end{align*}
and
\begin{align*}
\tilde{D}^{N}:=\{I \in \hat{D}^{N} ,  \bigcap_{i=1}^{N} x^{i} \notin I  \}.
\end{align*}
%\begin{align*}
%\tilde{D}^{N}=\{I^{N}_{\beta}, D^{N}_{\beta} \cap \mathcal{C} \ne\boldsymbol{\rho} \emptyset,  \bigcap_{i=1}^{N} x^{i} \notin D^{N}_{\beta} \}.
%\end{align*}
We observe that $\mbox{Card}(\hat{D}^{N})=N+1$ and then $\mbox{Card}(\tilde{D}^{j}) \geqslant 1$. We are now in a position to introduce $\tilde{f}$. Let us consider an arbitrary $I_{0} \in \tilde{D}^{N}$ and let $\mathbf{L} >1$ and define, for every $x \in [0,1]^{d}$,
\begin{align*}
\tilde{f}(x)=\bar{f}(x)+ \mathbf{L} \inf_{y  \notin I_{0} } \vert x -y  \vert  .
\end{align*}
We now show that (\ref{hyp:lipschitz}) and (\ref{hyp:level_set}) are satisfied by $\tilde{f}$,$X$ and $\alpha=\frac{1}{2}$. We first remark that, the functions $\bar{f}$ and $x \mapsto  \inf_{y \notin I_{0} } \vert x -y  \vert$ are 1-Lipischitz so that (\ref{hyp:lipschitz}) holds with $L_{\tilde{f}}=\mathbf{L}+1$.  Moreover, $\tilde{f}$ is $a.e.$ differentiable with piecewise constant derivatives and since $\mathbf{L}>1$, we have  $\vert \frac{\mbox{d}}{\mbox{d}x}\tilde{f}(x) \vert \geqslant \frac{1}{\mathbf{L}-1}>0 $ for $a.e.$ $x \in[0,1]$. Since $\tilde{f}^{-1}(q_{\frac{1}{2}}(\tilde{f},X)\})$ is finite, it follows from the coarea formula that (\ref{hyp:level_set}) holds for $\tilde{f},X$ and $\alpha=\frac{1}{2}$ with $M<+\infty$. 
We also observe that we have $\bar{f}(x^{i})=\tilde{f}(x^{i})$ for every $i \in \{1,\ldots,N \}$.  In order to conclude the proof, we show that, we can find $C>0$, $\rho \in(0, 1)$ and $\mathbf{L} \in(\frac{1+\rho}{1-\rho},+\infty)$ (which do not depend on $N$) such that $\vert q_{\alpha}(\bar{f},X)-q_{\alpha}(\tilde{f},X) \vert \geqslant C \rho^{N}$. To this end, we prove that,
%\begin{align*}
%\mathbb{P}(\tilde{f}(X) \leqslant \frac{1}{2} - C N^{-\frac{1}{d-1}}) \geqslant \frac{1}{2}
%\end{align*}
%This implies that $\boldsymbol{\rho}_{\frac{1}{2}}(\tilde{f},X) \leqslant  \frac{1}{2} - C N^{-\frac{1}{d-1}}$ 
\begin{align*}
\mathbb{P}(\tilde{f}(X) \leqslant \frac{1}{2} + C \rho^{N})<\frac{1}{2}.
\end{align*}

 First, we observe that, since $\tilde{f}$ is $\mathbf{L}+1$-Lipschitz, we have, for $I_{0} \in \{ I_{1}, \ldots I_{N} \}$,
\begin{align*}
\mathbb{P}(\tilde{f}(X) \leqslant \frac{1}{2} + C \rho^{N}) 
=&  \mathbb{P}(X \leqslant \frac{1}{2} + C \rho^{N} , X  \notin I_{0}) \\
& + \mathbb{P}(X  + \mathbf{L} \inf_{y \notin I_{0}} \vert X -y  \vert  \leqslant \frac{1}{2} + C \rho^{N} , X \in I_{0})  \\
\leqslant &  \mathbb{P}(X \leqslant \frac{1}{2} + C \rho^{N} , X \notin  I_{0}) \\
& + \mathbb{P}(\tilde{f}(c_{I_{0,-}}) - (\mathbf{L}+1) \vert X - c_{I_{0,-}} \vert \leqslant \frac{1}{2} + C \rho^{N}  , X \in I_{0,-})  \\
& + \mathbb{P}(\tilde{f}(c_{I_{0,+}}) - (\mathbf{L}+1) \vert X - c_{I_{0,+}} \vert\leqslant \frac{1}{2} + C \rho^{N} ,X \in I_{0,+}) ,
%=&  \mathbb{P}(X \leqslant \frac{1}{2} + C \rho^{N} \vert X \in  \Omega \setminus I_{0})  \mathbb{P}(X \in \Omega \setminus I_{0}) \\
%& + 2 \mathbb{P}(\tilde{f}(c_{I_{0,-}}) - 5 \vert X - c_{I_{0,-}} \vert_{\mathbb{R}^{d}} \leqslant \frac{1}{2} + C \rho^{N}  \vert X \in I_{0,-})  \mathbb{P}(X \in I_{0,-}) 
\end{align*}

where $c_{I_{0,-}} = \frac{\inf\{x \in I_{0,-}\}+\sup\{x \in I_{0,-}\}}{2}$ (and  similarly for $c_{I_{0,+}}$) is the center point of $I_{0,-}$.
When $I_{0}=I_{N+1}$, we have similarly

\begin{align*}
\mathbb{P}(\tilde{f}(X) \leqslant \frac{1}{2} + C \rho^{N})  \leqslant &  \mathbb{P}(X \leqslant \frac{1}{2} + C \rho^{N} , X \notin I_{N+1})\\
& +  \mathbb{P}(\tilde{f}(c_{I_{N+1}}) - (\mathbf{L}+1) \vert X - c_{I_{N+1}} \vert  \leqslant \frac{1}{2} + C \rho^{N}  ,X \in I_{N+1})  .
\end{align*}

Now, we study each terms appearing in the upper bounds of $\mathbb{P}(\tilde{f}(X) \leqslant \frac{1}{2} + C \rho^{N}) $ above. For $I_{j} \in \tilde{D}^{N}$, we have,  

\begin{align*}
\mathbb{P}(X \leqslant \frac{1}{2} + C \rho^{N},  X \notin I_{j}) =& \mathbb{P}(X \leqslant \frac{1}{2} + C \rho^{N})  - \mathbb{P}(X \leqslant \frac{1}{2} + C \rho^{N},  X \in   I_{j})  \\
\leqslant & \frac{1}{2} + C \rho^{N}  - \mathbb{P}(X \in  [\frac{1}{2}(1-\boldsymbol{\rho}^{N}),\frac{1}{2}(1+\min(\boldsymbol{\rho}^{N},2 C \rho^{N}))]) \mathds{1}_{j=N+1} \\
& -\mathbb{P}(X \in  I_{j,-})  \mathds{1}_{j \ne\boldsymbol{\rho} N+1} \\
=& \frac{1}{2} + C \rho^{N}  - \frac{1}{2} (\min(\boldsymbol{\rho}^{N},2 C \rho^{N})  +\boldsymbol{\rho}^{N})\mathds{1}_{j=N+1} -\boldsymbol{\rho}^{j-1}\frac{1-\boldsymbol{\rho}}{2}  \mathds{1}_{j \ne\boldsymbol{\rho} N+1}  .%\\
%\leqslant & \frac{1}{2} + C \rho^{N} -\frac{3^{-N}}{2} 
\end{align*}

Moreover, for $I_{0}=I_{j} \in \{ I_{1}, \ldots I_{N} \}$
\begin{align*}
\tilde{f}(c_{I_{j,-}}) = & c_{I_{j,-}}+  \mathbf{L} \inf_{y  \notin \ I_{j,-}} \vert c_{I_{j,-}} -y  \vert  =\frac{1}{2} -\boldsymbol{\rho}^{j-1}\frac{1+\boldsymbol{\rho}}{4}  +  \mathbf{L} \boldsymbol{\rho}^{j-1} \frac{1-\boldsymbol{\rho}}{4} = \frac{1}{2} +\boldsymbol{\rho}^{j-1}\frac{\mathbf{L}(1-\boldsymbol{\rho})-1-\boldsymbol{\rho}}{4} ,
\end{align*}
and similarly $\tilde{f}(c_{I_{j,+}})  = \frac{1}{2} +\boldsymbol{\rho}^{j-1}\frac{\mathbf{L}(1-\boldsymbol{\rho})+1+\boldsymbol{\rho}}{4} $. In addition, when $I_{0}=I_{N+1}$, we also compute (recall that $c_{I_{N+1}}= \frac{1}{2}$)
\begin{align*}
\tilde{f}(c_{I_{N+1}}) = &  \frac{1}{2} +  \mathbf{L} \inf_{y  \notin I_{N+1}} \vert \frac{1}{2} -y  \vert  =\frac{1}{2} +  \mathbf{L}  \frac{1}{2}\boldsymbol{\rho}^{N} .
\end{align*}

Exploiting those computations, it follows that for $I_{0}=I_{j} \in \{I_{1},\ldots,I_{N}\}$,

\begin{align*}
\mathbb{P}(\tilde{f}(c_{I_{j,-}}) - (\mathbf{L}+1) \vert X - c_{I_{j,-}} \vert_{\mathbb{R}^{d}} \leqslant \frac{1}{2} +& C \rho^{N}  \vert X \in I_{j,-})   \\
\leqslant   & \mathbb{P}( \vert X - c_{I_{j,-}} \vert_{\mathbb{R}^{d}} \geqslant \frac{1}{\mathbf{L}+1} (\boldsymbol{\rho}^{j-1}\frac{\mathbf{L}(1-\boldsymbol{\rho})-1-\boldsymbol{\rho}}{4} - C \rho^{N})  \vert X \in I_{j,-}) \\
= &(1- \frac{2}{\boldsymbol{\rho}^{j-1}(1-\boldsymbol{\rho})} \frac{2}{\mathbf{L}+1} (\boldsymbol{\rho}^{j-1} \frac{\mathbf{L}(1-\boldsymbol{\rho})-1-\boldsymbol{\rho}}{4} - C \rho^{N})_{+})_{+}.
%\leqslant  \frac{2}{5} (1- C3^{N} \rho^{N})
\end{align*}
Then, using the Bayes formula yields,
\begin{align*}
\mathbb{P}(\tilde{f}(c_{I_{j,-}}) - & (\mathbf{L}+1) \vert X - c_{I_{j,-}} \vert_{\mathbb{R}^{d}} \leqslant \frac{1}{2} + C \rho^{N} ,X \in I_{j,-}) \\  
\leqslant & \frac{1}{2}\boldsymbol{\rho}^{j-1}(1-\boldsymbol{\rho})(1- \frac{2}{\boldsymbol{\rho}^{j-1}(1-\boldsymbol{\rho})} \frac{2}{\mathbf{L}+1} (\frac{\mathbf{L}(1-\boldsymbol{\rho})-1-\boldsymbol{\rho}}{4} \boldsymbol{\rho}^{j-1}- C \rho^{N})_{+})_{+} \\
\leqslant & \frac{1}{2}\boldsymbol{\rho}^{j-1}(1-\boldsymbol{\rho})(1-\frac{\mathbf{L}(1-\boldsymbol{\rho})-1-\boldsymbol{\rho}}{(1-\boldsymbol{\rho})(\mathbf{L}+1)}+\frac{4 C \rho^{N}}{(1-\boldsymbol{\rho})(\mathbf{L}+1)}\boldsymbol{\rho}^{-j+1} ) \\
=& \boldsymbol{\rho}^{j-1}(\frac{1+2 C \rho^{N} \boldsymbol{\rho}^{-j+1} }{\mathbf{L}+1} ).
\end{align*}
By a similar approach, we also derive
\begin{align*}
\mathbb{P}(\tilde{f}(c_{I_{j,+}}) -& (\mathbf{L}+1) \vert X - c_{I_{j,+}} \vert_{\mathbb{R}^{d}} \leqslant \frac{1}{2} + C \rho^{N}  ,X \in I_{j,+})   \\
\leqslant & \frac{1}{2}\boldsymbol{\rho}^{j-1}(1-\boldsymbol{\rho})(1- \frac{2}{\boldsymbol{\rho}^{j-1}(1-\boldsymbol{\rho})} \frac{2}{\mathbf{L}+1} (\frac{\mathbf{L}(1-\boldsymbol{\rho})+1+\boldsymbol{\rho}}{4} \boldsymbol{\rho}^{j-1}- C \rho^{N})_{+})_{+} \\
\leqslant &\boldsymbol{\rho}^{j-1}(\frac{-\boldsymbol{\rho}+2 C \rho^{N} \boldsymbol{\rho}^{-j+1} }{\mathbf{L}+1} )_{+}.
\end{align*}
In the same way,  when $I_{0}=I_{N+1}$,

\begin{align*}
\mathbb{P}(\tilde{f}(c_{I_{N+1}}) -  (\mathbf{L}+1) \vert X - & c_{I_{N+1}} \vert_{\mathbb{R}^{d}} \leqslant \frac{1}{2} + C \rho^{N} , X \in I_{N+1} )  \\
 \leqslant   & \mathbb{P}( \vert X - c_{I_{N+1}} \vert_{\mathbb{R}^{d}} \geqslant \frac{1}{\mathbf{L}+1} (\frac{\mathbf{L}}{2}\boldsymbol{\rho}^{N} - C \rho^{N}),  X \in I_{N+1}) \\
 =& (\boldsymbol{\rho}^{N}-\frac{2}{\mathbf{L}+1} (\frac{\mathbf{L}}{2}\boldsymbol{\rho}^{N} - C \rho^{N})  _{+})_{+} \\
  \leqslant &\frac{1}{\mathbf{L}+1}   \boldsymbol{\rho}^{N} +  \frac{2}{\mathbf{L}+1}  C \rho^{N}.
%= &(1- \frac{1}{\mathbf{L}+1} (\frac{\mathbf{L}}{2}- C\boldsymbol{\rho}^{-N}   \rho^{N})_{+})_{+}.
%\leqslant  \frac{2}{5} (1- C3^{N} \rho^{N})
\end{align*}
%and then,
%\begin{align*}
%\mathbb{P}(\tilde{f}(c_{I_{N+1}}) -  (\mathbf{L}+1) \vert X - c_{I_{N+1}} \vert_{\mathbb{R}^{d}} \leqslant \frac{1}{2} + C \rho^{N}  \vert X \in I_{N+1})  \mathbb{P}(X \in I_{N+1})  \leqslant & \boldsymbol{\rho}^{N} \frac{1}{\mathbf{L}+1} +  \frac{2}{\mathbf{L}}  C \rho^{N}.
%\end{align*}

We conclude that for $I_{0}=I_{j} \in \{I_{1},\ldots,I_{N}\}$,

\begin{align*}
\mathbb{P}(\tilde{f}(X) \leqslant \frac{1}{2} + C \rho^{N})  \leqslant &   \frac{1}{2} + C \rho^{N} -\boldsymbol{\rho}^{j-1}\frac{1-\boldsymbol{\rho}}{2}  + \boldsymbol{\rho}^{j-1}\frac{1}{\mathbf{L}+1} +  \frac{4}{\mathbf{L}+1} C \rho^{N}\\
\leqslant & \frac{1}{2}+\frac{\mathbf{L}+5}{\mathbf{L}+1}C \rho^{N} -\boldsymbol{\rho}^{j-1} \frac{(\mathbf{L}+1)(1-\boldsymbol{\rho})-2}{2 (\mathbf{L}+1)},
\end{align*}
and for $I _{0}=I_{N+1}$,
\begin{align*}
\mathbb{P}(\tilde{f}(X) \leqslant \frac{1}{2} + C \rho^{N})  \leqslant &   \frac{1}{2} + C \rho^{N} -\frac{1}{2}\boldsymbol{\rho}^{N}  +   \frac{1}{\mathbf{L}+1}  \boldsymbol{\rho}^{N} +  \frac{2}{\mathbf{L}+1}  C \rho^{N}\\
=& \frac{1}{2}+\frac{\mathbf{L}+3}{\mathbf{L}+1}C \rho^{N} - \frac{\mathbf{L}-1}{2(\mathbf{L}+1)} \boldsymbol{\rho}^{N} .
\end{align*}

It now remains to guarantee that both upper bounds we just derived are strictly lower than $\frac{1}{2}$. We fix $\boldsymbol{\rho} \in (0,1)$ and  $\mathbf{L} \in (\frac{1+\boldsymbol{\rho}}{1-\boldsymbol{\rho}},+\infty)$ so that $ \frac{\mathbf{L}-1}{2(\mathbf{L}+1)} >0$ and $(\mathbf{L}+1)(1-\boldsymbol{\rho})-2>0$. Therefore, if $C$ satisfies
\begin{align*}
C < \min ( \frac{(\mathbf{L}+1)(1-\boldsymbol{\rho})-2}{2\boldsymbol{\rho} (\mathbf{L}+5)} ,\frac{\mathbf{L}-1}{2(\mathbf{L}+3) }  ) ,
\end{align*}
and $\rho \leqslant \boldsymbol{\rho}$, then
\begin{align*}
\mathbb{P}(\tilde{f}(X) \leqslant \frac{1}{2} + C \rho^{N})  <& \frac{1}{2},
\end{align*}
which is the expected conclusion (choose for instance $\mathbf{L}=4$, $\rho=\boldsymbol{\rho}=\frac{1}{2}$ and $C<\frac{1}{18}$).

\end{proof}

\section{Numerical illustration}
\label{Numerical_illustration}
To conclude this article, we propose a numerical illustration of Theorem \ref{th:conv_gen} and Theorem \ref{th:conv_gen_unknown}. In both cases, we propose an application for $d=1$ and $d=2$. When $d=1$, we expect to observe an exponential convergence and when $d=2$, we expect a polynomial convergence of order $\frac{1}{N}$. Let us present our examples. \\
\subsection{Exponential convergence}
First for $d=1$, we consider $X \sim \mathcal{N} (\frac{1}{5}, \frac{1}{25} ) \mathds{1}_{[0,1]}$ a Gaussian distribution with mean $\frac{1}{5}$, variance $\frac{1}{25} $ and restricted to the interval $[0,1]$. The function $f$ is defined on $x \in [0,1]$ by
\begin{align*}
f(x) = 0.8x-0.3+\exp(-11.534x^{1.95})+\exp(-2(x-0.9)^{2}).
\end{align*}
This function is studied in \cite{Bernard_Cohen_Guyader_Malrieu_2022} for the approximation of failure probabilities. In particular, we have $L_{f} \approx 1.61$.  We fix $\alpha=0.999$. In this case $q(f,X) \approx 1.3503$. \\%1.35034

In Figures \ref{fig:fig1} (A) and (B), we represent respectively
\begin{align}
\label{eq:error_log_numer_connu}
\ln(\vert q_{\alpha}(f,X) -q^{N}_{\alpha}(f,X) \vert ),
\end{align}
and
\begin{align}
\label{eq:error_log_numer_inconnu}
\ln(\vert q_{\alpha}(f,X) -q^{\diamond,N}_{\alpha}(f,X) \vert ),
\end{align}
 resulting from Algorithm \ref{algo:gen_Lip_connu} and \ref{algo:gen_Lip_inconnu} $w.r.t.$ $N$. Those quantities appear in blue while the red line represents the linear approximation with slope respectively given by $\ln(0.8453)$ for Figure \ref{fig:fig1} (A) and by $\ln(0.9988)$ for Figure \ref{fig:fig1} (B). In particular, we can infer the numerical approximation for $\rho$ given by $\rho \approx 0.8453$ for Algorithm \ref{algo:gen_Lip_connu} and $\rho \approx 0.9988$ for Algorithm \ref{algo:gen_Lip_inconnu} \\
 
 \begin{figure}[ht]
    \centering
   
    \begin{subfigure}{0.45\textwidth}
        \centering
        \includegraphics[width=\textwidth]{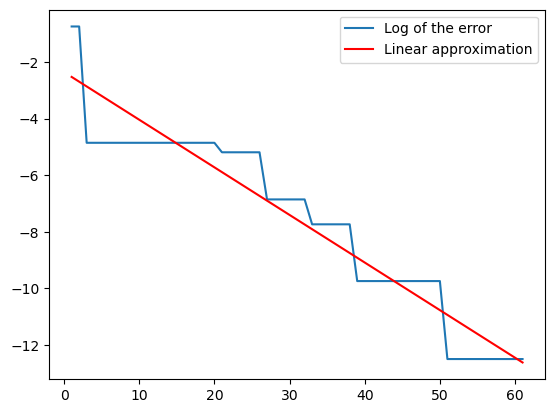}
        \caption{}
        \label{Fig:d1_known}
    \end{subfigure}
    \hfill
    \begin{subfigure}{0.45\textwidth}
        \centering
        \includegraphics[width=\textwidth]{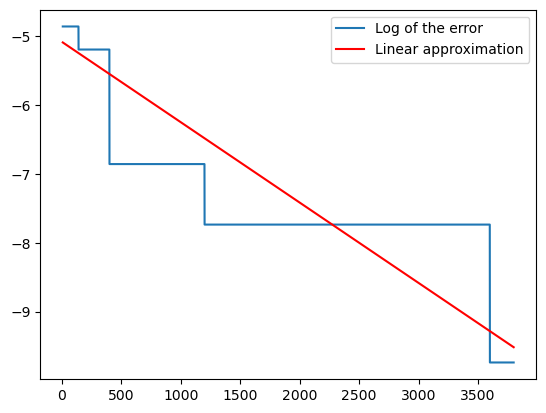}
        \caption{}
        \label{Fig:d1_unknown}
    \end{subfigure}

    \caption{Logarithm of the error of estimation of Algorithm \ref{algo:gen_Lip_connu} (see Figure (A)) and \ref{algo:gen_Lip_inconnu} (see Figure (B)) $w.r.t.$ the number of observations.}
    \label{fig:fig1}
\end{figure}
 
 The drawings are consistent with the results from Theorem \ref{th:conv_gen} and Theorem \ref{th:conv_gen_unknown}, as both exhibit a linearly decreasing behavior. This confirms the exponential nature of convergence for both algorithms. However, in this practical case,  Algorithm \ref{algo:gen_Lip_connu} shows faster numerical convergence, $i.e.$, it has a smaller $\rho$. This is expected, as in the unknown Lipschitz case, Algorithm \ref{algo:gen_Lip_inconnu} tests multiple Lipschitz constant candidates and apply a similar implementation to Algorithm \ref{algo:gen_Lip_connu}  for each candidate.  The budget allocated for each candidate is only a fraction of the total global budget, $N$, allocated across all candidates. Additionally, the piecewise constant behavior of the error $w.r.t.$ $N$ for both algorithms is also expected. This occurs because our algorithms require a certain budget to subdivide more deeply the space $[0,1]$ $i.e.$, to progress from subdivision with depth $k \in \mathbb{N}$ to depth $k+1$. Until that budget is reached, the algorithms will continue returning the same result.\\

\subsection{Polynomial convergence}
In a second step, we propose an example when $d=2$. In this case, we consider $X=(X_{1},X_{2})$ where $X_{1}$ and $X_{2}$ are independent and identically distributed under $\mathcal{U}_{[0,1]}$, the uniform distribution on $[0,1]$. The function $f$ is defined on $x \in [0,1]^{2}$ by $f(x)=x_{1}+x_{2}$.  In this toy example, $f$ is Lipschitz with Lipschitz constant $L_{f}=\sqrt{2}$, $f(X)$ follows the Irwin Hall distribution of degree 2 and we know that $q_{\alpha}(f,X)=2-\sqrt{2(1-\alpha)}$. As before, we fix $\alpha=0.999$ and we have $q_{\alpha}(f,X)\approx 1.9553$.\\

In Figures \ref{fig:fig2} (A) and (B), we represent respectively the quantities (\ref{eq:error_log_numer_connu}) and (\ref{eq:error_log_numer_inconnu})
 resulting from Algorithm \ref{algo:gen_Lip_connu} and \ref{algo:gen_Lip_inconnu} $w.r.t.$ $\ln(N)$. Those quantities appear in blue while the red line represents the linear approximation with slope respectively given by -1.4781 for Figure \ref{fig:fig2} (A) and by -0.7440 for Figure \ref{fig:fig2} (B).  \\

 \begin{figure}[ht]
    \centering
   
    \begin{subfigure}{0.45\textwidth}
        \centering
        \includegraphics[width=\textwidth]{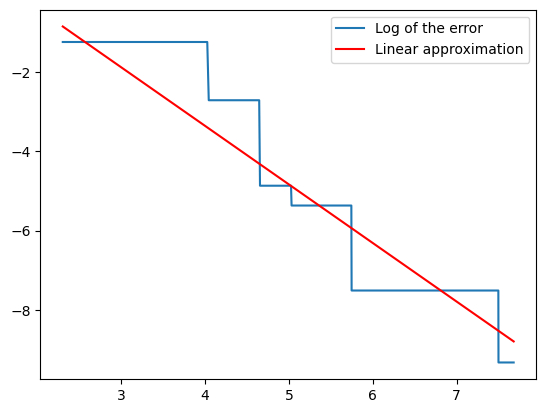}
        \caption{}
        \label{Fig:d2_known}
    \end{subfigure}
    \hfill
    \begin{subfigure}[b]{0.45\textwidth}
        \centering
        \includegraphics[width=\textwidth]{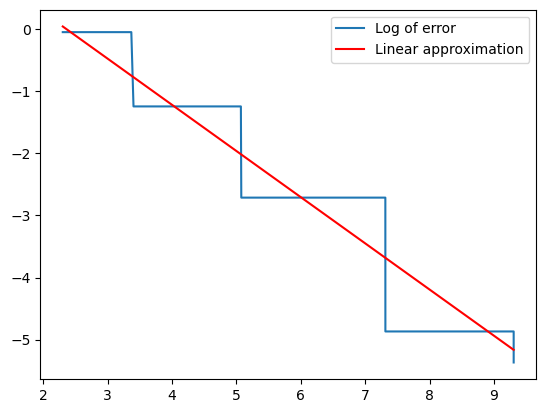}
        \caption{}
        \label{Fig:d2_unknown}
    \end{subfigure}

    \caption{Logarithm of the error of estimation of Algorithm \ref{algo:gen_Lip_connu} (see Figure (A)) and \ref{algo:gen_Lip_inconnu} (see Figure (B)) $w.r.t.$ the Logarithm of the number of observations.}
    \label{fig:fig2}
\end{figure}
The expected linear decreasing behavior with slope $1=\frac{1}{d-1}$ is thus observed (to some extent) aligning with the result from Theorem \ref{th:conv_gen} and Theorem \ref{th:conv_gen_unknown}. Similarly to the case when $d=1$, we find that while both algorithms exhibit similar convergence behavior, the numerical approximations converge more quickly when the Lipschitz constant of $f$ is known, i.e. when Algorithm \ref{algo:gen_Lip_connu} is used.

\bibliography{Biblio}
\bibliographystyle{plain}

%\newpage
%For $x \in D^{j}_{\beta}$, $D^{j}_{\beta} \in \tilde{D}^{j}$, (Version avec la norme sur Rd et pas norme infinie)
%\begin{align*}
%\tilde{f}(d^{j}_{\beta}) = &  d^{j}_{\beta_{1}}+  2 \inf_{y \in \Omega \setminus D^{j}_{\beta} } \vert d^{j}_{\beta} -y  \vert_{\mathbb{R}^{d}}  \\
%=& \frac{1}{2} +  2 \delta^{j} = \frac{1}{2}+ \frac{d^{\frac{1}{2}}}{3^{j}} 
%\end{align*}
%
%\begin{align*}
%\tilde{f}(x) \geqslant \frac{1}{2} - \vert  x_{1}-\frac{1}{2} \vert +  2 \inf_{y \in \Omega \setminus D^{j}_{\beta} } \vert x -y  \vert_{\mathbb{R}^{d}}  
%\end{align*}
%Moreover, for $\vert X - d^{j}_{\beta} \vert \leqslant C $
\end{document}